\newtheorem{theorem}{Theorem}[section]
\newtheorem{proposition}[theorem]{Proposition}
\newtheorem{lemma}[theorem]{Lemma}
\begin{document}
\title{There does not exist a distance-regular graph with intersection array  $\{80, 54,12; 1, 6, 60\}$ }

\author{Jack H. Koolen$^{\dag\S}$, Quaid Iqbal$^\dag$, Jongyook Park$^\ast$, Masood Ur Rehman$^\dag$
\\ \\
\small $^\dag$School of Mathematical Sciences,\\
\small University of Science and Technology of China, \\
\small 96 Jinzhai Road, Hefei, 230026, Anhui, PR China\\
\small $^\S$Wen-Tsun Wu Key Laboratory of CAS,\\
\small 96 Jinzhai Road, Hefei, 230026, Anhui, PR China\\
\small $^\ast$Department of Applied Mathematics,\\
\small Wonkwang University, Iksan, Jeonbuk, 54538,  Republic of Korea\\
\small {\tt e-mail: koolen@ustc.edu.cn, quaid@mail.ustc.edu.cn,} \\
\small {\tt~~~~~~~ jongyook@wku.ac.kr, masood@mail.ustc.edu.cn}\vspace{-3pt}
}
\maketitle
\date{}
\vspace{-24pt}
\begin{abstract}
In this paper we will show that there does not exist a distance-regular graph $\Gamma$ with intersection array  $\{80, 54,12; 1, 6, 60\}$. We first show that  a local graph $\Delta$ of $\Gamma$ does not contain a coclique with 5 vertices, and then we prove that the graph $\Gamma$ is geometric by showing that $\Delta$ consists of 4 disjoint cliques with 20 vertices. Then we apply a result of Koolen and Bang to the graph $\Gamma$, and we could obtain that there is no such a distance-regular graph.
\end{abstract}

\textbf{Keywords} : distance-regular graphs, geometric distance-regular graphs, Delsarte cliques, the claw-bound

\textbf{AMS classification} 05C50, 05E30

\section{Introduction}
In this paper we will show that there does not exist a distance-regular graph $\Gamma$ with intersection array  $\{80, 54,12; 1, 6, 60\}$. In order to do so, we will first study the claw-bound that was introduced by Bose in the case of pseudo geometric strongly regular graphs \cite{Bose63}. We note that a similar bound for amply-regular graphs with an $s$-claw was shown by Godsil~\cite{CG1993} and that Koolen and Park~\cite{KPShilla} also studied the claw-bound for distance-regular graphs and they mentioned that three known examples of Terwilliger graphs with $c_2\geq2$ meet equality in the claw-bound. After that, Gavrilyuk~\cite{ALG2010} showed that if an amply-regular Terwilliger graph with $c\geq2$ attains equality in the claw-bound, then the graph is one of the known Terwilliger graphs.

Next, we will show that for a given vertex $x$ of $\Gamma$, the subgraph $\Delta$ induced on the neighbors of $x$ does not contain a coclique of size $5$. From the claw-bound, it is easy to see that $\Delta$ does not contain a coclique of size $6$. One may think that non-existence of a coclique of size $5$ follows from the result of Gavrilyuk~\cite[Theorem 4.2]{ALG2010}, but it is not true because his result works if for any non-adjacent two distinct vertices in $\Delta$, there exists a coclique of size $5$ containg them. So, we need to give a proof to show that $\Delta$ does not contain a coclique of size $5$ without using the result of Gavrilyuk.  Also, we remark that Gavrilyuk~\cite[Theorem 4.2]{ALG2010} missed to state that the graph is Terwilliger. 

Finally, we will show that $\Delta$ consists of $4$ disjoint cliques of size $20$. This shows that the graph $\Gamma$ is geometric and then a result by Koolen and Bang~\cite{KB2010} finishes the proof. For undefined terminologies, see Section 2 or \cite{BCN89,VKT2016}.

\section{Preliminaries}
All the graphs considered in this paper are finite, undirected and simple. The reader is referred to \cite{BCN89} for more information. Let $\Gamma$ be a connected graph with vertex set $V(\Gamma)$. The {\em distance} $d_{\Gamma}(x,y)$ between two vertices $x,y\in V(\Gamma)$ is the length of a shortest path between $x$ and $y$ in $\Gamma$. The {\em diameter} $D=D(\Gamma)$ of $\Gamma$ is the maximum distance between
any two vertices of $\Gamma$. For each $x\in V(\Gamma)$, let $\Gamma_i(x)$ be the set of vertices in $\Gamma$ at distance
$i$ from $x$ ($0\leq i\leq D$). In addition, define $\Gamma_{-1}(x)=\emptyset$ and $\Gamma_{D+1}(x)=\emptyset$. For the sake of simplicity, let $\Gamma(x)=\Gamma_1(x)$ and we denote $x\sim y$ if two vertices $x$ and $y$ are adjacent. In particular, $\Gamma$ is {\em regular} with {\em valency} $k$ if $k=|\Gamma(x)|$ holds for all $x\in V(\Gamma)$. For a vertex $x$ of $\Gamma$, the number $|\Gamma(x)|$ is called the valency of $x$ in $\Gamma$.

For any two vertices $x,y$ at distance $i=d_{\Gamma}(x,y)$, we consider the numbers $c_i(x,y)=|\Gamma_{i-1}(x)\cap \Gamma(y)|$, $a_i(x,y)=|\Gamma_i(x)\cap \Gamma(y)|$ and $b_i(x,y)=|\Gamma_{i+1}(x)\cap \Gamma(y)|$ $(0\leq i\leq D)$. When $d_{\Gamma}(x,y)=2$, we usually denote $c_2(x,y)$ by $\mu_{\Gamma}(x, y)$.

We say that {\em intersection number} $a_i$ ($b_i$ and $c_i$, respectively) exists if the number $a_i(x,y)$ ($b_i(x,y)$ and $c_i(x,y)$, respectively) does depend only on $i=d_{\Gamma}(x,y)$ not on the choice of $(x,y)$ with $d_{\Gamma}(x,y) =i$. Set $c_0 = b_D = 0$ and observe $a_0=0$ and $c_1=1$. A connected graph $\Gamma$ with diameter $D$ is called a {\em  distance-regular graph} if there exist intersection numbers $c_i, a_i, b_i$ for all $i=0,1,\ldots, D$. The {\em intersection array} of a distance-regular graph with diameter $D$ is the array $\{b_0,b_1,\ldots,b_{D-1};c_1,c_2,\ldots,c_D\}$.

For a connected graph $\Gamma$ with diameter $D$, the {\em adjacency matrix} $A=A(\Gamma)$ is the matrix whose rows
and columns are indexed by $V(\Gamma)$, where the ($x, y$)-entry is $1$ whenever $x\sim y$ and
$0$ otherwise. The {\em eigenvalues} of $\Gamma$ are the eigenvalues of $A(\Gamma)$.

A {\em clique} in a graph is a set of pairwise adjacent vertices. A {\em complete} graph is a graph whose vertex set is a clique. And a {\em coclique} in a graph is a set of pairwise non-adjacent vertices. A graph with vertex set as a clique is called a {\em complete graph}. For a distance-regular graph $\Gamma$ with valency $k$, diameter $D\geq2$ and the smallest eigenvalue $\theta_D$, it is known that the size of a clique $C$ in $\Gamma$ is bounded by $|C|\leq1-k/\theta_D$ (\cite{BCN89}). A clique $C$ in $\Gamma$ is called a {\em Delsarte clique} if $C$ contains exactly $1-k/\theta_D$ vertices. A distance-regular graph $\Gamma$ with diameter $D\geq2$ is called {\em geometric} if there exists a set $\mathcal{C}$ of Delsarte cliques such that each edge of $\Gamma$ lies in a unique $C\in \mathcal{C}$.

For a distance-regular graph $\Gamma$ with diameter $D$, let $C,C'$ be non-empty subsets of $V(\Gamma)$ and $x$ be a vertex of $\Gamma$. We write  $d_{\Gamma}(x,C):={\rm min}\{d_{\Gamma}(x,y)~|~y\in C\}$ and $d_{\Gamma}(C,C'):={\rm min}\{d_{\Gamma}(x,y)~|~x\in C,~ y\in C'\}$. The {\em covering radius} of $C$, denoted by $\rho(C)$ is defined as $\rho(C):={\rm max}\{d_{\Gamma}(x,C)~|~x\in V(\Gamma)\}$, and define $C_i:=\{x\in V(\Gamma)~|~ d_{\Gamma}(x,C)=i\}$ $(0\leq i\leq \rho(C))$. We write $B_{xi}(C):=|C\cap \Gamma_i(x)|$ and the numbers $B_{xi}(C)$ $(i=0,1,\ldots,D)$ are called the {\em outer distribution numbers} of $C$. A non-empty subset $C$ of $V(\Gamma)$ with covering radius $\rho(C)$ is called a {\em complete regular code}, if the outer distribution number $B_{xi}(C)$ is dependent only on $i$ and $d_{\Gamma}(x,C)$, i.e., there exist numbers $e_{\ell i}$ $(\ell=0,1,\ldots,\rho(C), i=0,1,\ldots,D)$ such that  for all vertices $x$ of $\Gamma$ and $i\in\{0,1,\ldots,D\}$, we have $B_{xi}(C)=e_{\ell i}$, where $\ell=d_{\Gamma}(x,C)$. We write $\psi_i(C):=e_{ii}$ for $i=0,1,\ldots,\rho(C)$. Assume that $\Gamma$ is a geometric distance-regular graph with a set $\mathcal{C}$ of Delsarte cliques and diameter $D$. For a given interger $j\in\{0,1,\ldots,D\}$ and any pair of vertices $x,y\in V(\Gamma)$ with $d_{\Gamma}(x,y)=j$, we denote the number of cliques $C\in\mathcal{C}$ that contain $y$ and satisfy $d_{\Gamma}(x,C)=j-1$ by $\tau_j:=\tau_j(\mathcal{C})$.

In 2010, Koolen and Bang~\cite{KB2010} showed the following two results, and they are important to prove Theorem~\ref{main}.

\begin{lemma}(cf. \cite[Lemma 4.1]{KB2010})\label{taupsi}
Suppose that $\Gamma$ is a geometric distance-regular graph with valency $k \geq 2$, diameter $D \geq 2$ and smallest eigenvalue $\theta_D$ . Then the following hold:
\begin{enumerate}
\item[$(i)$] $b_i=-(\theta_D+\tau_i)(1-k/\theta_D-\psi_i)$  $(1\leq i\leq D-1)$.
\item[$(ii)$] $c_i=\tau_i\psi_{i-1}$ $(1\leq i \leq D)$.
\end{enumerate}
\end{lemma}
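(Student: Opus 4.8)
The plan is to fix two vertices $x,y$ with $d_{\Gamma}(x,y)=i$ and to count the neighbours of $y$ lying at a prescribed distance from $x$ by sorting them according to the unique Delsarte clique that contains each edge at $y$. Since $\Gamma$ is geometric, the cliques of $\mathcal{C}$ through $y$ partition the $k$ edges incident with $y$, so every neighbour $z$ of $y$ lies in exactly one clique $C\in\mathcal{C}$ with $y,z\in C$. Hence computing $c_i=|\Gamma_{i-1}(x)\cap\Gamma(y)|$ and $b_i=|\Gamma_{i+1}(x)\cap\Gamma(y)|$ reduces to deciding, clique by clique, how many vertices of each $C$ lie at distance $i-1$ or $i+1$ from $x$.

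The count rests on two structural facts. First, because each $C$ is a clique, all of its vertices lie at distance $\ell$ or $\ell+1$ from any given vertex $v$, where $\ell=d_{\Gamma}(v,C)$; using that a Delsarte clique in a geometric distance-regular graph is a complete regular code with covering radius $\rho(C)=D-1$, the number of vertices of $C$ at distance exactly $\ell$ is the constant $\psi_{\ell}$, and the remaining $|C|-\psi_{\ell}$ vertices lie at distance $\ell+1$. Second, since $|C|=1-k/\theta_{D}$ and the cliques of $\mathcal{C}$ partition the edges at $y$, the number of cliques of $\mathcal{C}$ through any vertex equals $k/(|C|-1)=-\theta_{D}$.

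Now take $x,y$ with $d_{\Gamma}(x,y)=i$ and any $C\in\mathcal{C}$ through $y$. From $d_{\Gamma}(x,y)=i$ and the two-distance property, $d_{\Gamma}(x,C)\in\{i-1,i\}$. If $d_{\Gamma}(x,C)=i-1$, then $C$ contributes its $\psi_{i-1}$ nearest vertices to $\Gamma_{i-1}(x)$ and nothing to $\Gamma_{i+1}(x)$; each of these vertices lies in the clique $C$ and, being at distance $i-1\neq i$, is distinct from $y$, hence is a neighbour of $y$. If $d_{\Gamma}(x,C)=i$, then $C$ contributes its $|C|-\psi_{i}$ farthest vertices to $\Gamma_{i+1}(x)$ and nothing to $\Gamma_{i-1}(x)$, and as before each is distinct from $y$ and hence a neighbour of $y$. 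By definition $\tau_{i}$ is the number of cliques through $y$ with $d_{\Gamma}(x,C)=i-1$, so by the clique count the number with $d_{\Gamma}(x,C)=i$ is $-\theta_{D}-\tau_{i}$. Summing over the cliques through $y$ yields
\[
c_{i}=\tau_{i}\,\psi_{i-1}\qquad\text{and}\qquad b_{i}=(-\theta_{D}-\tau_{i})(|C|-\psi_{i})=-(\theta_{D}+\tau_{i})(1-k/\theta_{D}-\psi_{i}),
\]
which are precisely $(ii)$ and $(i)$. One should also note that the sorting is both exhaustive and disjoint: a neighbour $z$ of $y$ with $d_{\Gamma}(x,z)=i-1$ forces $d_{\Gamma}(x,C)=i-1$ for its clique, and one with $d_{\Gamma}(x,z)=i+1$ forces $d_{\Gamma}(x,C)=i$, so each relevant neighbour is counted in exactly one clique.

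The main obstacle is not the double count but justifying its inputs, namely that a Delsarte clique in a geometric distance-regular graph is a complete regular code, so that the symbols $\psi_{\ell}$ appearing in the statement are genuinely well-defined numbers, independent of the chosen clique and of the chosen vertex at distance $\ell$ from it. I would therefore first recall (or invoke) this regularity, after which the nearest/farthest split and the constancy of $-\theta_{D}$ are automatic; the endpoint cases $i=D$ in $(ii)$ and $i=D-1$ in $(i)$, where the relevant cliques sit at the covering radius $\rho(C)=D-1$, then need only a brief check that the same split still applies. A final remark is that, since $c_{i}$, $b_{i}$ and the $\psi$'s are intersection numbers, these identities in fact show that the per-pair count $\tau_{i}$ is independent of the pair $(x,y)$, which is what legitimises the notation $\tau_{i}$.
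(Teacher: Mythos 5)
Your proof is correct. Note that the paper itself gives no proof of this lemma---it is imported from Koolen and Bang \cite[Lemma 4.1]{KB2010}---and your clique-by-clique double count, resting on the two facts that a Delsarte clique in a geometric distance-regular graph is a completely regular code with covering radius $D-1$ (so the $\psi_\ell$ are well defined) and that every vertex lies in exactly $k/(-k/\theta_D)=-\theta_D$ cliques of $\mathcal{C}$, is essentially the same argument as in that reference.
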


\begin{lemma}(cf. \cite[Lemma 4.2]{KB2010})\label{kbgeometric}
Let $\Gamma$ be a geometric distance-regular graph with diameter $D\geq2$ and intersection number $c_2\geq2$. Then $\tau_2\geq \psi_1$ holds.
\end{lemma}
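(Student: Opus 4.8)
The plan is to fix two vertices $x,y$ with $d_\Gamma(x,y)=2$ and to produce $\psi_1$ pairwise distinct Delsarte cliques that pass through $x$ and lie at distance $1$ from $y$; since by definition the number of such cliques is exactly $\tau_2$, this immediately gives $\psi_1\leq\tau_2$. Because $c_2\geq 2$, the vertices $x$ and $y$ have a common neighbour, so there is at least one clique $L\in\mathcal{C}$ with $y\in L$ and $d_\Gamma(x,L)=1$. As Delsarte cliques in a geometric distance-regular graph are complete regular codes, such an $L$ contains exactly $\psi_1=e_{11}$ vertices of $\Gamma_1(x)$; and since these vertices lie on the clique $L$ together with $y$, each of them is adjacent to $y$ as well, hence is a common neighbour of $x$ and $y$.

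First I would fix one such clique $L$ and label the $\psi_1$ neighbours of $x$ in $L$ as $z_1,\dots,z_{\psi_1}$. For each $i$, let $M_i\in\mathcal{C}$ be the unique Delsarte clique containing the edge $xz_i$. Then $x\in M_i$ and $z_i\in M_i$, while $z_i\sim y$, so $d_\Gamma(y,M_i)=1$; here one must check $y\notin M_i$, which holds because $x\in M_i$ and $y$ on the same clique would force $x\sim y$, contradicting $d_\Gamma(x,y)=2$. Thus each $M_i$ is a Delsarte clique through $x$ at distance $1$ from $y$, and applying the definition of $\tau_2$ to the pair $(y,x)$ (together with the fact that $\tau_2$ depends only on the distance) shows that there are exactly $\tau_2$ cliques of this kind.

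The heart of the argument is the distinctness of $M_1,\dots,M_{\psi_1}$. Suppose $M_i=M_j=:M$ for some $i\neq j$. Then $z_i$ and $z_j$ are two distinct vertices lying in both $M$ and $L$. But two distinct Delsarte cliques of a geometric distance-regular graph meet in at most one vertex: if $u\neq v$ both lay in $C\cap C'$, then the edge $uv$ would be contained in both $C$ and $C'$, contradicting the uniqueness of the clique of $\mathcal{C}$ carrying a given edge. Hence $M=L$, which is impossible since $x\in M$, $y\in L$ and $d_\Gamma(x,y)=2$. Therefore the cliques $M_i$ are pairwise distinct, so there are at least $\psi_1$ cliques through $x$ at distance $1$ from $y$, whence $\psi_1\leq\tau_2$.

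I expect the main obstacle to be conceptual rather than computational: recognising that the single property carrying the whole proof is that two distinct Delsarte cliques share at most one vertex, and correctly reading off $\psi_1$ as the number of neighbours of $x$ on a clique $L$ at distance $1$ from $x$ via the complete regular code structure. The remaining care lies in the bookkeeping of the distance conditions ($d_\Gamma(x,L)=1$, $d_\Gamma(y,M_i)=1$, and $x,y$ lying off the relevant cliques) and in invoking the symmetry of $\tau_2$ under interchanging $x$ and $y$, which is justified by $\tau_2$ being well defined as a quantity depending only on the distance $2$ and on $\mathcal{C}$.
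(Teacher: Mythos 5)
Your proof is correct, but note that there is nothing in the paper to compare it against: the paper states this lemma with a ``cf.''\ citation and uses it as a black box, the actual proof living in Koolen--Bang \cite[Lemma 4.2]{KB2010}. Your argument is the natural one (and essentially the argument in that source): fix $x,y$ at distance $2$, take a clique $L\in\mathcal{C}$ through $y$ with $d_\Gamma(x,L)=1$, read off the $\psi_1$ neighbours $z_1,\dots,z_{\psi_1}$ of $x$ on $L$ via complete regularity, and send each $z_i$ to the unique clique $M_i\in\mathcal{C}$ on the edge $xz_i$; injectivity holds because two distinct cliques of $\mathcal{C}$ share at most one vertex, and each $M_i$ contains $x$ with $d_\Gamma(y,M_i)=1$, so $\tau_2\geq\psi_1$. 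Three small remarks. First, your phrase ``two distinct Delsarte cliques meet in at most one vertex'' is only justified for cliques belonging to $\mathcal{C}$ (which is all you use): the edge-uniqueness is a property of the family $\mathcal{C}$, not of Delsarte cliques as such, and your parenthetical argument correctly invokes only that. Second, your proof never really uses $c_2\geq2$: a common neighbour of $x$ and $y$ exists whenever $d_\Gamma(x,y)=2$ since $c_2\geq1$ always, and for $c_2=1$ the conclusion is anyway trivial because $c_2=\tau_2\psi_1$ forces $\tau_2=\psi_1=1$; so your argument proves slightly more than the stated lemma. Third, you lean on two well-definedness facts --- that the cliques of $\mathcal{C}$ are completely regular codes with common outer distribution numbers (so $\psi_1$ makes sense), and that $\tau_2$ is a constant independent of the ordered pair at distance $2$ (so counting cliques through $x$ at distance $1$ from $y$ indeed yields $\tau_2$) --- both of which are presupposed by the paper's definitions of $\psi_i$ and $\tau_j$, so invoking them is legitimate here.
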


\section{The claw-bound}
In this section we recall the claw-bound which was found by several authors. For example, Bose introduced it for the class of pseudo geometric strongly regular graphs \cite{Bose63}, Godsil~\cite{CG1993} showed it for amply-regular graphs, Koolen and Park~\cite{KPShilla} worked on the claw-bound for distance-regular graphs and Gavrilyuk~\cite{ALG2010} gave further results on the claw-bound for amply regular graphs. In the following lemma, we will reprove the claw-bound because the equality case in the claw-bound is important in this paper.

\begin{lemma}\label{clawlemma}
Let $\Gamma$ be a regular graph with valency $k$ and $n$ vertices.
Assume that there exists a constant $c$ such that $\mu_{\Gamma}(x,y) \leq c$ for all distinct non-adjacent vertices $x$ and $y$ of $\Gamma$.
Also let $\bar{C}$ be a coclique in $\Gamma$ with $s$ vertices. 
Then 
\begin{equation}\label{claw}
\frac{(k+1)s-n}{{s \choose 2}}\leq c
\end{equation}
holds.
Moreover, if equality holds in $(\ref{claw})$, then any vertex outside $\bar{C}$ has at least one neighbor in $\bar{C}$ and any vertex of $\Gamma$ has at most two neighbors in $\bar{C}$, and $\mu_{\Gamma}(y, y') = c$ for all distinct vertices $y$ and $y'$ of $\bar{C}$. In particular, if equality holds for $s$ in  $(\ref{claw})$, then all cocliques in $\Gamma$ have at most $s$ vertices.
\end{lemma}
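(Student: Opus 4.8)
The plan is to prove the inequality by a standard two-moment double count of the edges between $\bar{C}$ and the rest of $\Gamma$, and then to read off the equality conditions from the two inequalities that get combined. For each vertex $v\in V(\Gamma)$ set $a(v):=|\Gamma(v)\cap\bar{C}|$, so that $a(v)=0$ whenever $v\in\bar{C}$, since $\bar{C}$ is a coclique. Counting incidences from the side of $\bar{C}$ gives the first moment $\sum_{v\in V(\Gamma)}a(v)=\sum_{w\in\bar{C}}|\Gamma(w)|=sk$. Counting pairs of $\bar{C}$-vertices with a common neighbour gives the second moment $\sum_{v\in V(\Gamma)}\binom{a(v)}{2}=\sum_{\{w,w'\}\subseteq\bar{C}}\mu_{\Gamma}(w,w')\le c\binom{s}{2}$, where the inequality uses that any two distinct vertices of $\bar{C}$ are non-adjacent and hence have at most $c$ common neighbours.

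The key algebraic step is the elementary fact that $(a-1)(a-2)\ge 0$ for every integer $a$, with equality exactly when $a\in\{1,2\}$. Summing this over $v\notin\bar{C}$ and using that the $\bar{C}$-vertices contribute nothing to either moment, I would obtain $\sum_{v}a(v)^2\ge 3sk-2(n-s)$. On the other hand, the second-moment bound rewrites as $\sum_{v}a(v)^2=2\sum_v\binom{a(v)}{2}+\sum_v a(v)\le cs(s-1)+sk$. Comparing the two yields $3sk-2(n-s)\le cs(s-1)+sk$, which simplifies to $2\big((k+1)s-n\big)\le c\,s(s-1)=2c\binom{s}{2}$, i.e.\ precisely $(\ref{claw})$.

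The equality analysis then drops out, since equality in $(\ref{claw})$ forces equality in both of the inequalities just combined. Tightness of the second-moment bound means $\mu_{\Gamma}(w,w')=c$ for every pair of distinct $w,w'\in\bar{C}$; tightness of $(a(v)-1)(a(v)-2)\ge 0$ for all $v\notin\bar{C}$ means $a(v)\in\{1,2\}$ there, i.e.\ every vertex outside $\bar{C}$ has at least one and at most two neighbours in $\bar{C}$, while the vertices of $\bar{C}$ have none. Together this says every vertex of $\Gamma$ has at most two neighbours in $\bar{C}$, which are exactly the asserted equality conditions.

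For the final ``in particular'' statement I would argue by contradiction: given a coclique $\bar{C}^{*}$ with $t>s$ vertices, the inequality already proved applies to it with the same constant $c$, giving $(k+1)t-n\le c\binom{t}{2}$, while equality at $\bar{C}$ pins down $c=\big((k+1)s-n\big)/\binom{s}{2}$. The main obstacle is that this numerical inequality by itself does not exclude large $t$ (as a function of $t$ the quantity $\big((k+1)t-n\big)/\binom{t}{2}$ is not monotone and tends to $0$), so I expect the real work to lie in exploiting the rigid configuration produced by equality at $\bar{C}$ --- namely that $\bar{C}$ meets every outside vertex in one or two points and that all its pairs have exactly $c$ common neighbours --- to show that a coclique strictly larger than $s$ cannot coexist with such a configuration, thereby forcing $t\le s$.
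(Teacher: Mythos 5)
Your proof of inequality $(\ref{claw})$ and of the ``moreover'' equality conditions is correct, and it is in substance the same argument as the paper's, written pointwise instead of set-theoretically. The paper applies the two-term inclusion--exclusion (Bonferroni) bound $n\geq|\bigcup_{i}(\Gamma(y_i)\cup\{y_i\})|\geq\sum_{i}|\Gamma(y_i)\cup\{y_i\}|-\sum_{i<j}|\Gamma(y_i)\cap\Gamma(y_j)|$ and reads the equality conditions off the two steps. For a vertex $v$ lying in exactly $a(v)$ of those closed neighbourhoods, the validity and tightness of that truncated inclusion--exclusion rest precisely on $1-a(v)+\binom{a(v)}{2}=\binom{a(v)-1}{2}\geq 0$, which is your inequality $(a(v)-1)(a(v)-2)\geq 0$; your first and second moments are the same double count, and the equality conditions (every vertex covered at least once, no vertex covered three times, all pairs with $\mu_\Gamma=c$) come out identically. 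Your version has the small virtue of making the equality analysis mechanical; the paper's is shorter. There is no substantive difference in this part.

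The genuine gap is in the final ``in particular'' claim, which you leave unproved. The strategy you sketch --- playing the rigidity of the original coclique $\bar{C}$ against a hypothetical coclique $\bar{C}^{*}$ with $t>s$ vertices --- is the wrong configuration to exploit: rigidity at $\bar{C}$ controls how $\bar{C}$ meets the rest of the graph, but nothing bounds how many neighbours a vertex of $\bar{C}$ has in $\bar{C}^{*}$, and no contradiction is forthcoming from that picture alone. The missing idea is that equality in $(\ref{claw})$ is a purely numerical condition on $(n,k,c,s)$: since the inequality was proved for an \emph{arbitrary} coclique with $s$ vertices, once equality holds numerically, \emph{every} coclique of size $s$ attains it and hence inherits all the rigidity conclusions, in particular that every vertex outside it has at least one neighbour inside it. Now take any $s$-subset $\bar{C}'$ of the hypothetical $\bar{C}^{*}$. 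A vertex of $\bar{C}^{*}\setminus\bar{C}'$ lies outside $\bar{C}'$ yet has no neighbour in $\bar{C}'$, because $\bar{C}^{*}$ is a coclique --- a contradiction. This one-line transfer of the equality conditions to sub-cocliques of $\bar{C}^{*}$ is what your argument is missing; with it, your proof is complete.
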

\begin{proof}
Assume that $\bar{C}$ has vertives $y_1, y_2, \ldots,y_s$. We note that $|\Gamma(y_i)\cap\Gamma(y_j)|\leq c$ for all $1\leq i< j\leq s$. Then by the principle of inclusion and exclusion, we have

$n$ $\geq$ $|\displaystyle\cup_{i=1}^{s}(\Gamma(y_i)\cup\{y_i\})|$ $\geq$ $\displaystyle\sum_{i=1}^s|\Gamma(y_i)\cup\{y_i\}|$ $-$ $\displaystyle\sum_{1\leq i< j\leq s}|\Gamma(y_i)\cap\Gamma(y_j)|$.

This shows that $n\geq s(k+1)-{s\choose 2}c$, and equality implies that any vertex outside $\bar{C}$ has at least one neighbor in $\bar{C}$ and any vertex of $\Gamma$ has at most two neighbors in $\bar{C}$, and $\mu_{\Gamma}(y_i, y_j) =|\Gamma(y_i)\cap\Gamma(y_j)|= c$ for all $1\leq i< j\leq s$.
\end{proof}

A connected graph $\Gamma$ is called {\em amply regular} with parameters $(k, a, c)$ if $\Gamma$ is a regular graph with valency $k$ satisfying that any pair of adjacent vertices of $\Gamma$ have exactly $a$ common neighbors and any two vertices of $\Gamma$ at distance $2$ have $c$ common neighbors. 
For a vertex $x$ of a graph $\Gamma$, the {\em local graph} $\Delta = \Delta(x)$ at $x$ is the subgraph induced on the neighbors of $x$. Note that for a vertex $x$ of  an amply regular graph $\Gamma$ with parameters $(k, a, c)$, any maximal coclique in the local graph $\Delta(x)$ has at least  $\frac{k}{a+1}$ vertices. 

By applying  Lemma \ref{clawlemma} to local graphs of amply regular graphs we obtain the following proposition. 
\begin{proposition}\label{clawprop}
Let $\Gamma$ be an amply regular graph with parameters $(k, a, c)$. Let $x$ be a vertex of $\Gamma$ and let $C$ be a maximal coclique of size $s$ in the local graph $\Delta(x)$. 
Then \begin{equation}\label{claw2}
\frac{(a+1)s-k}{{s \choose 2}}\leq c-1
\end{equation}
holds.
Moreover, if equality holds in $(\ref{claw2})$, then  any vertex $z$ of $\Delta(x)$ which is not in $C$, has at least one neighbor and at most two neighbors in $C$, and $\mu_{\Delta}(y_1, y_2) = c-1$ for all distinct vertices $y_1,y_2\in\bar{C}$.
\end{proposition}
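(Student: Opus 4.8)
The plan is to apply Lemma~\ref{clawlemma} directly to the local graph $\Delta = \Delta(x)$, once its relevant parameters have been identified. First I would record that $\Delta$ has exactly $k$ vertices, since $\Gamma$ is regular with valency $k$ and $\Delta$ is the subgraph induced on the $k$ neighbours of $x$. Next I would verify that $\Delta$ is regular: for any vertex $z \in \Gamma(x)$, the neighbours of $z$ inside $\Delta$ are precisely the common neighbours of $x$ and $z$ in $\Gamma$, and since $x \sim z$ the amply regular hypothesis gives exactly $a$ of them. Hence $\Delta$ is a regular graph on $k$ vertices with valency $a$.

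The essential local computation is the bound on $\mu_\Delta$. For two distinct non-adjacent vertices $y_1, y_2 \in \Delta$, both are neighbours of $x$ and are mutually non-adjacent, so they lie at distance $2$ in $\Gamma$ and therefore satisfy $|\Gamma(y_1) \cap \Gamma(y_2)| = c$. The common neighbours of $y_1$ and $y_2$ inside $\Delta$ are exactly those common neighbours that also lie in $\Gamma(x)$, that is, the set $\Gamma(x) \cap \Gamma(y_1) \cap \Gamma(y_2)$. Since $x$ itself belongs to $\Gamma(y_1) \cap \Gamma(y_2)$ but not to $\Gamma(x)$, at least one of the $c$ common neighbours (namely $x$) fails to lie in $\Delta$, giving $\mu_\Delta(y_1, y_2) \leq c - 1$ for all non-adjacent $y_1, y_2 \in \Delta$.

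With these three facts in hand, I would apply Lemma~\ref{clawlemma} to $\Delta$ in the role of the ambient graph of that lemma, with valency $a$ in place of $k$, vertex count $k$ in place of $n$, the constant $c - 1$ in place of the constant $c$, and the coclique $C$ of size $s$ in place of $\bar{C}$. The inequality $(\ref{claw})$ then specialises to $\frac{(a+1)s - k}{{s \choose 2}} \leq c - 1$, which is exactly $(\ref{claw2})$. For the moreover part I would invoke the equality clause of Lemma~\ref{clawlemma}: equality in $(\ref{claw2})$ forces every vertex of $\Delta$ outside $C$ to have at least one neighbour in $C$, every vertex of $\Delta$ to have at most two neighbours in $C$, and $\mu_\Delta(y_1, y_2) = c - 1$ for all distinct $y_1, y_2 \in C$.

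The only point that is more than a mechanical translation of the claw-bound is the ``$-1$'' in the bound on $\mu_\Delta$: one must argue that $x$ is always a genuine common neighbour in $\Gamma$ of any two non-adjacent vertices of $\Delta$, yet does not itself belong to $\Delta$, so that the count $c$ available in $\Gamma$ drops to at most $c - 1$ inside the local graph. Everything else is a direct specialisation of Lemma~\ref{clawlemma}; in particular, maximality of $C$ plays no role in the inequality itself and is needed only to ensure the first equality conclusion is consistent, since a maximal coclique already leaves no vertex of $\Delta$ without a neighbour in $C$.
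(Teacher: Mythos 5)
Your proposal is correct and follows essentially the same route as the paper, whose proof simply states that the result "immediately follows" from Lemma~\ref{clawlemma}; you have supplied exactly the specialisation the paper leaves implicit (that $\Delta(x)$ is $a$-regular on $k$ vertices with $\mu_{\Delta}\leq c-1$, the ``$-1$'' coming from $x$ being a common neighbour in $\Gamma$ that lies outside $\Delta$). The only cosmetic quibble is your closing remark on maximality: maximality of $C$ is not needed to make the equality conclusions consistent, it simply makes the ``at least one neighbour'' conclusion automatic, and it is never used in the inequality --- exactly as you say.
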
 
\begin{proof}
This result immediately follows from  Lemma \ref{clawlemma}.
\end{proof}

Gavrilyuk~\cite{ALG2010} showed the following:

\begin{theorem} Let $\Gamma$ be an amply regular graph with parameters $(k, a, c)$. Assume that there exists a positive integer $s$ such that equality holds in $(2)$ and let $s'$ be the smallest such $s$. If every induced path of length $2$ of $\Gamma$  is a subgraph of an induced subgraph $K_{1,s'}$ of $\Gamma$,
then $\Gamma$ is the icosahedron, the Doro graph or the Conway-Smith graph. 
\end{theorem}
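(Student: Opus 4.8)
\noindent\emph{Proposed proof strategy.}\quad The plan is to extract rigid local structure from equality in the claw-bound and then recognise the resulting local graphs, reducing the classification to a finite check. First I would note that equality in $(\ref{claw2})$ at $s=s'$ is the numerical identity
\begin{equation*}
(a+1)s'-k=\binom{s'}{2}(c-1),
\end{equation*}
which makes no reference to any particular coclique. Hence, rerunning the inclusion--exclusion count from the proof of Lemma~\ref{clawlemma} for an arbitrary coclique of size $s'$ in an arbitrary local graph $\Delta(x)$, the whole chain of inequalities collapses, so \emph{every} such coclique attains equality and, by Proposition~\ref{clawprop}, has the property that any two of its vertices share exactly $c-1$ common neighbours in $\Delta(x)$ while every other vertex of $\Delta(x)$ has one or two neighbours in it. The hypothesis that every induced path of length $2$ is contained in an induced $K_{1,s'}$ says exactly that any two non-adjacent vertices of $\Delta(x)$ lie together in a coclique of size $s'$; combining this with the previous sentence forces $\mu_{\Delta(x)}(y_1,y_2)=c-1$ for \emph{all} non-adjacent $y_1,y_2\in\Delta(x)$. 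Thus each local graph is an $a$-regular co-edge-regular graph on $k$ vertices in which non-adjacent pairs have the constant number $c-1$ of common neighbours; fixing a maximum coclique $C$, the $k-s'$ vertices outside $C$ split into $\binom{s'}{2}(c-1)$ vertices adjacent to exactly two vertices of $C$ (with $c-1$ per pair) and $2k-s'(a+2)$ adjacent to exactly one.

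Second, I would bring in the Terwilliger property, namely that the common neighbours of two vertices at distance $2$ induce a clique; as the authors remark after the statement, this hypothesis is in fact needed and should be added, since the three conclusion graphs are Terwilliger. Inside $\Delta(x)$ it says that for non-adjacent $y_1,y_2$ their $c-1$ common neighbours form a clique. I would use this, together with the coclique-neighbourhood structure above, to pin down the number of common neighbours of an \emph{adjacent} pair in $\Delta(x)$ as well, so that $\Delta(x)$ becomes strongly regular (or is otherwise rigidly determined by $(k,a,c)$ and $s'$). Its eigenvalues then interlace those of $\Gamma$, and the integrality of the eigenvalue multiplicities of $\Delta(x)$, together with $2k-s'(a+2)\ge 0$ and the divisibility $\binom{s'}{2}\mid\bigl((a+1)s'-k\bigr)$, constrain the parameters severely.

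Third, I would carry out the parameter enumeration. These feasibility conditions, combined with the standard feasibility conditions for amply regular Terwilliger graphs (see \cite{BCN89}), should leave only finitely many candidate parameter sets $(k,a,c,s')$, each of which I would match against the known examples; for instance $s'=2$ with local graph $C_5$ yields the icosahedron, while the remaining admissible solutions are to be identified as the Doro graph and the Conway--Smith graph.

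The hard part will be the passage from this local data to the global graph. Knowing the isomorphism type of every $\Delta(x)$ does not by itself determine $\Gamma$, and a priori the parameter equations could admit spurious solutions with no corresponding graph. Proving uniqueness of the reconstruction for each admissible parameter set, and excluding the spurious solutions, is where the genuine work lies: it is here that the rigidity of the clique structure around each maximum coclique, the Terwilliger condition, and a careful interlacing analysis of $\Gamma$ itself must be combined to force $\Gamma$ into precisely the three named graphs.
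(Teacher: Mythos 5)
First, a point of reference: the paper does not prove this theorem at all. It is quoted as background from Gavrilyuk \cite{ALG2010} (his Theorem 4.2), and the authors' only contribution to it is the remark, immediately after the statement, that the Terwilliger hypothesis is missing. So there is no internal proof to compare your attempt against; your proposal has to be judged as a free-standing argument for Gavrilyuk's result, with the Terwilliger assumption restored (which you correctly do).

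Judged that way, your opening step is correct and is the right place to start: equality in (\ref{claw2}) at $s=s'$ is a purely numerical identity $(a+1)s'-k=\binom{s'}{2}(c-1)$, so every coclique of size $s'$ in every local graph $\Delta(x)$ attains equality in Lemma~\ref{clawlemma}; since an induced $K_{1,s'}$ containing an induced path $y_1\sim x\sim y_2$ must have $x$ as its center, the hypothesis then forces $\mu_{\Delta(x)}(y_1,y_2)=c-1$ for \emph{all} non-adjacent $y_1,y_2$ in $\Delta(x)$, and your count of the vertices with one, respectively two, neighbours in a fixed $s'$-coclique is also right. From that point on, however, the proposal is a program rather than a proof, and the gaps sit exactly where the theorem lives. (i) You assert that the Terwilliger property will ``pin down'' the number of common neighbours of adjacent pairs so that $\Delta(x)$ becomes strongly regular; no argument is given, and amply regular parameters plus co-edge-regularity of $\Delta(x)$ do not by themselves control triple intersections $|\Gamma(x)\cap\Gamma(y_1)\cap\Gamma(y_2)|$ for adjacent $y_1,y_2$. (ii) The claim that the feasibility conditions leave ``only finitely many'' parameter sets $(k,a,c,s')$ is unsupported: equality is a single equation in several integer unknowns, and nonnegativity plus divisibility do not yield finiteness. (iii) Most seriously, the conclusion names three specific graphs, and nothing in your outline produces them. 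The decisive steps in any proof of this type are to show that the local graphs are the pentagon or the Petersen graph, and then to invoke classification theorems: the uniqueness of the icosahedron as the connected locally-$C_5$ graph, and Hall's classification of connected locally-Petersen graphs, of which only the Doro and Conway--Smith graphs are Terwilliger (the complement of the triangular graph $T(7)$ is not). You acknowledge yourself that the passage from local data to the global graph ``is where the genuine work lies''; that is an accurate admission that the proof has not been carried out, so what you have is a sound first lemma followed by a research plan, not a proof of the theorem.
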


Note that in \cite[Theorem 4.2]{ALG2010}  the assumption that the graph has to be Terwilliger is missing.

\section{No coclique with $5$ vertices in a local graph}

In this section, we will show that any local graph $\Delta$ of a distance-regular graph $\Gamma$ with intersection array $\{80,54,12; 1,6,60\}$ does not contain a coclique with $5$ vertices. Note that $\Gamma$ has eigenvalues $80, 26, 5$ and $-4$ with multiplicities $1, 80, 144$ and $720$ respectively. Also we note that $\Delta$ is a regular graph with $80$ vertices and valency $25$. 

In the following lemma, we will study some properties of maximal cliques in a local graph $\Delta$ of $\Gamma$.

\begin{lemma}\label{clique}
Let $\Gamma$ be a distance-regular graph with intersection array \\
$\{80,54,12; 1,6,60\}$. Let $x$ be a vertex of $\Gamma$ and let $\Delta:=\Delta(x)$ be the local graph at $x$.
Let $C_1$ and $C_2$ be two maximal cliques in $\Delta$ with $v_1$ and $v_2$ vertices respectively.
Then the following hold:
\begin{enumerate}
\item[$(i)$] The smallest eigenvalue of $\Delta$ is at least $-3.$
\item[$(ii)$] $v_1 \leq 20$ and if equality holds, then every vertex $z$ of $\Delta$ which is not in $C_1$ has exactly $2$ neighbors in $C_1$. 
\item[$(iii)$] If $v_1$ and $v_2$ are both at least $11$, then $C_1$ and $C_2$ intersect in at most $4$ vertices.
\item[$(iv)$] If $v_1 + v_2 \geq 31$, then $C_1$ and $C_2$ do not intersect.
\end{enumerate}
\end{lemma}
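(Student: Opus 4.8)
The plan is to prove the four parts in order, using (i) as the foundation for both (ii) and (iii), and then deducing (iv) from (ii) and (iii). Throughout I pass freely between a clique in $\Delta$ and the corresponding clique in $\Gamma$ obtained by adjoining $x$.

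For (i) I would simply invoke the standard bounds on the eigenvalues of a local graph of a distance-regular graph (see \cite{BCN89}): every eigenvalue $\eta\ne a_1$ of $\Delta$ satisfies $\eta\ge -1-b_1/(\theta_1+1)$, where $\theta_1$ is the second largest eigenvalue of $\Gamma$. Here $\theta_1=26$ and $b_1=54$, so the bound is $-1-54/27=-3$; since the remaining eigenvalue $a_1=25$ is positive, the smallest eigenvalue of $\Delta$ is at least $-3$. For (ii), note that $C_1\cup\{x\}$ is a clique of $\Gamma$, so the Delsarte bound gives $v_1+1\le 1-k/\theta_D=21$, i.e. $v_1\le 20$. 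If $v_1=20$ then $\hat C:=C_1\cup\{x\}$ is a Delsarte clique and hence a complete regular code, so every vertex at distance $1$ from $\hat C$ has the same number of neighbours in $\hat C$; in particular this holds for each $z\in\Delta\setminus C_1$, which is adjacent to $x\in\hat C$. To pin down the constant I would double count the edges between $C_1$ and $\Delta\setminus C_1$: each $w\in C_1$ has $a_1-(v_1-1)=25-19=6$ neighbours in $\Delta\setminus C_1$, giving $120$ edges onto the $80-20=60$ vertices of $\Delta\setminus C_1$; by the regularity just noted each such $z$ has exactly $120/60=2$ neighbours in $C_1$.

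The heart of the argument, and the step I expect to be the main obstacle, is (iii). Since $C_1\ne C_2$ are maximal, $C_1\cup C_2$ is not a clique, so there is a non-adjacent pair $p\in C_1\setminus C_2$, $q\in C_2\setminus C_1$; as $p,q\in\Gamma(x)$ we have $d_\Gamma(p,q)=2$ and $\mu_\Gamma(p,q)=c_2=6$. The common neighbours of $p$ and $q$ contain $x$ together with all of $I:=C_1\cap C_2$, whence $|I|+1\le 6$ and $|I|\le 5$. The difficulty is that this easy count only rules out $|I|\ge 6$, not $|I|=5$, so an extra idea is needed. To exclude $|I|=5$ I would use that equality $\mu_\Gamma(p,q)=6=|I|+1$ forces $p$ to have no neighbour in $C_2\setminus C_1$ and $q$ to have no neighbour in $C_1\setminus C_2$, since any such neighbour would be a sixth common neighbour. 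Propagating this (any $q'\in C_2\setminus C_1$ is then also non-adjacent to $p$, so the same tightness applies to the pair $(p,q')$, and so on) shows that $C_1\setminus C_2$ and $C_2\setminus C_1$ are completely non-adjacent, while each is completely joined to $I$. Choosing $6$ vertices from each side (possible since $v_i-5\ge 6$) together with $I$ then exhibits an induced subgraph isomorphic to $K_5\vee(K_6\sqcup K_6)$. Computing the quotient matrix of its equitable partition $\{I,\,P',\,R'\}$ gives smallest eigenvalue $(9-\sqrt{241})/2<-3$ (equivalently $241>225$), so by interlacing $\Delta$ would have an eigenvalue less than $-3$, contradicting (i). Hence $|I|\le 4$.

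Finally (iv) follows by combining (ii) and (iii). If $v_1+v_2\ge 31$, then since each $v_i\le 20$ by (ii) neither can be at most $10$ (else the other would exceed $20$), so both are at least $11$ and (iii) gives $|I|\le 4$. Suppose $I\ne\emptyset$ and pick $y\in I$. The common neighbours of the adjacent pair $x,y$ number $a_1=25$ and contain all of $(C_1\cup C_2)\setminus\{y\}$, so $25\ge v_1+v_2-|I|-1$, giving $v_1+v_2\le 26+|I|\le 30$, which contradicts $v_1+v_2\ge 31$. Therefore $C_1\cap C_2=\emptyset$.
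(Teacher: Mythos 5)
Your proposal is correct and follows essentially the same route as the paper: part (i) via the standard local-eigenvalue bound from \cite{BCN89}, part (ii) via the clique (Delsarte) bound applied to $C_1\cup\{x\}$ with the equality case giving the constant number of neighbors, part (iii) by using $c_2=6$ to cap the intersection at $5$ vertices and then excluding $|I|=5$ through quotient-matrix interlacing against (i), and part (iv) by the same valency count at a common vertex. The only difference is one of execution, and in your favour: in (iii) you explicitly prove that there are no edges between $C_1\setminus I$ and $C_2\setminus I$ before writing down an equitable quotient matrix, a structural fact that the paper's two-part quotient matrix (with its averaged row sums) uses implicitly without justification.
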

\begin{proof}
$(i)$: It follows by Theorem  \cite[Theorem 4.4.3]{BCN89}, that the smallest eigenvalue of $\Delta$ is at least $-1-\frac{b_{1}}{(\theta_{1}+1)}=-1-\frac{54}{27}=-3$, where $\theta_1$ is the second largest eigenvalue of $\Gamma$.
\newline
$(ii)$: By \cite[Proposition 4.4.6]{BCN89}, any clique $C$ of the graph $\Gamma$ has at most $1 + \frac{80}{4} = 21$ vertices and if equality holds, then any vertex adjacent to $C$ has exactly $3$ neighbors in $C$. This shows $(ii)$. 
\newline
$(iii)$: Without loss of generality, we assume that $11\leq v_1 \leq v_2 \leq 20$. And we assume that $C_1$ and $C_2$ intersect in at least 5 vertices. As $c_2=6$, $C_1$ and $C_2$ intersect in exactly 5 vertices. We denote the set of the five commnon vertices of $C_1$ and $C_2$ by $I$. Let $\pi=\{(C_1\cup C_2)\setminus I,I\}$ be a partition of the union of $C_1$ and $C_2$. And we consider the quotient matrix $Q$ corresponding to $\pi$.
Then $Q$ is of the form 
$Q= \left( \begin{matrix}
4 & v_1+v_2-10\\
5 & \frac{(v_1-5)(v_1-6)+(v_2-5)(v_2-6)}{v_1+v_2-10}
\end{matrix} \right)$. Note that the smallest eigenvalue of $Q$ is less than $-3$ for all $11\leq v_1 \leq v_2 \leq 20$. By interlacing, the smallest eigenvalue of $\Delta$ is also less then $-3$. This contradicts $(i)$. So, $C_1$ and $C_2$ intersect in at most $4$ vertices.
\newline
$(iv)$: As $v_1 + v_2 \geq 31$, $v_1$ and $v_2$ are both at least $11$. By $(iii)$, we know that  $C_1$ and $C_2$ do intersect in at most 4 vertices. Suppose that $C_1$ and $C_2$ have a common vertex, say $x$. Then $x$ has at least $3 + (31-8) = 26$ neighbors in $\Delta$. This contradicts that the valency of $\Delta$ is $25$. So, $C_1$ and $C_2$ have no common vertex.
\end{proof}

We will show that any coclique in a local graph $\Delta$ of $\Gamma$ has at most $4$ vertices.  From Proposition~\ref{clawlemma}, one can easily see that  $\Delta$ does not contain a coclique with $6$ vertices. Now we will first study  the following lemmas, and then in Proposition~\ref{5coclique}, we will prove that $\Delta$ does not contain a coclique with $5$ vertices.

For vertices $x_{1},\dots, x_{s}$ of $\Delta$, we define the  graphs $ \bar{\Delta}(x_{1},\dots, x_{s})$ as the subgraph of $\Delta$ induced  on the vertices of $\Delta$ that are not adjacent to all of $x_{1},\dots, x_{s}$.

\begin{lemma}\label{5coclique1} Let $\Gamma$ be a distance-regular graph with intersection array \\
$\{80,54,12; 1,6,60\}$. Let $x$ be a vertex of $\Gamma$ and let $\Delta:=\Delta(x)$ be the local graph at $x$. If $\Delta$ contains a coclique $\bar{C}$ with vertices $y_1,y_2,y_3,y_4,y_5$, then the following hold:
\begin{enumerate}
\item[$(i)$] $\mu_{\Delta}(y_1, y_2) = 5$;
\item[$(ii)$] Every vertex of $\Delta$ is adjacent to at most two vertices of $\bar{C}$;
\item[$(iii)$] $\bar{\Delta}(y_1, y_2, y_3, y_4)$ is a complete graph with $6$ vertices containing $y_5$;
\item[$(iv)$] $\bar{\Delta}(y_1, y_2, y_3)$ is a graph on $17$ vertices with valencies $10$ and possibly $16$.
\item[$(v)$] $\bar{\Delta}(y_1, y_2)$ is a graph on $33$ vertices and any vertex of $\bar{\Delta}(y_1, y_2)$ 
has valency at least $15$. 
Moreover, it contains a vertex with even valency.
\end{enumerate}
\end{lemma}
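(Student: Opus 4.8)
The plan is to read off (i) and (ii) from the claw-bound, pin down the global adjacency pattern of $\bar C$ by one inclusion--exclusion count, and then deduce (iii)--(v) from that pattern using a single recurring device: any vertex that misses enough of $\bar C$ regenerates a fresh $5$-coclique, to which (i)--(ii) again apply. To begin, note that two non-adjacent vertices of $\Delta$ lie at distance $2$ in $\Gamma$ and so share at most $c_2-1=5$ neighbours \emph{inside} $\Delta$ (their sixth common neighbour in $\Gamma$ is $x\notin\Delta$). Applying the inclusion--exclusion of Lemma~\ref{clawlemma} in $\Delta$, a $6$-coclique would force $|V(\Delta)|\ge 6\cdot 26-\binom{6}{2}\cdot 5=81>80$, so no $6$-coclique exists and $\bar C$ is maximal. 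For $s=5$ the same count gives $5\cdot 26-\binom{5}{2}\cdot 5=80=|V(\Delta)|$, so the claw-bound $(\ref{claw2})$ holds with equality; the equality case of Proposition~\ref{clawprop} then yields $\mu_\Delta(y_i,y_j)=5$ for all $i\neq j$, which is (i), and that every vertex has at most two neighbours in $\bar C$, which is (ii).

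Next I would count. Writing $A_i$ for the vertices adjacent to $y_i$ and to no other $y_j$, and $B_{ij}$ for the common neighbours of $y_i,y_j$, condition (ii) kills all triple intersections, so $|A_i|=25-4\cdot 5=5$ and $|B_{ij}|=\mu_\Delta(y_i,y_j)=5$. Hence the $80$ vertices partition as $\bar C$ $(5)$, the private classes $A_i$ $(25)$, and the shared classes $B_{ij}$ $(50)$, with the $y_i$ the only vertices adjacent to no member of $\bar C$. This directly identifies the vertex sets $\bar\Delta(y_1,y_2,y_3,y_4)=\{y_5\}\cup A_5$ $(6$ vertices$)$, $\bar\Delta(y_1,y_2,y_3)=\{y_4,y_5\}\cup A_4\cup A_5\cup B_{45}$ $(17)$, and $\bar\Delta(y_1,y_2)=\{y_3,y_4,y_5\}\cup\bigcup_i A_i\cup\bigcup_{ij}B_{ij}$ $(33)$ occurring in (iii)--(v). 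For (iii) it remains only to see that $A_5$ is a clique: two non-adjacent vertices of $A_5$ would give a $6$-coclique together with $y_1,y_2,y_3,y_4$, which is impossible, so $\{y_5\}\cup A_5$ is $K_6$.

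For the valencies in (iv), the regeneration device does the work. A private vertex $v\in A_4$ is non-adjacent to $y_1,y_2,y_3,y_5$, so $\{v,y_1,y_2,y_3,y_5\}$ is a $5$-coclique; applying (i)--(ii) to it gives $\mu_\Delta(v,y_i)=5$ for $i\le 3$, while no neighbour of $v$ can meet two of $y_1,y_2,y_3$. Thus exactly $3\cdot 5=15$ neighbours of $v$ are adjacent to $\{y_1,y_2,y_3\}$, and $v$ has degree $25-15=10$ in $\bar\Delta(y_1,y_2,y_3)$; the same count gives degree $10$ for $y_4,y_5$ and for each vertex of $A_5$. A vertex $b\in B_{45}$ has at most the $16$ remaining vertices of the $17$-set as neighbours, so $\deg b\le 16$; and if $b$ is \emph{not} adjacent to all of them, a non-neighbour $z$ (necessarily in $A_4\cup A_5\cup B_{45}$, since $b\sim y_4,y_5$) makes $\{b,y_1,y_2,y_3,z\}$ a $5$-coclique, forcing $\mu_\Delta(b,y_i)=5$ $(i\le 3)$ with no double coverage and hence $\deg b=10$. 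So every valency is $10$ or $16$, giving (iv). For (v), any vertex $w$ of $\bar\Delta(y_1,y_2)$ is non-adjacent to both $y_1$ and $y_2$, so it has at most $\mu_\Delta(w,y_1)+\mu_\Delta(w,y_2)\le 10$ neighbours adjacent to $y_1$ or $y_2$, leaving at least $25-10=15$ neighbours in $\bar\Delta(y_1,y_2)$; and since $\bar\Delta(y_1,y_2)$ has the odd number $33$ of vertices while its degree sum is even, some valency must be even.

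The step I expect to require the most care is the valency dichotomy in (iv): the clean conclusion ``$10$ or $16$, nothing in between'' is not a numerical accident but rests entirely on re-deriving (i)--(ii) for the regenerated $5$-cocliques and on verifying that the pairwise correction terms in the inclusion--exclusion vanish by (ii). With only the general bound $\mu_\Delta(\cdot,y_i)\le 5$ one is left with the weak range $10\le\deg b\le 16$, which would be too coarse for the contradiction extracted later; so the main thing to get right is that every relevant vertex genuinely sits in a $5$-coclique whenever it fails to be adjacent to everything in sight.
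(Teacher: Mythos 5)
Your proposal is correct and takes essentially the same approach as the paper: the claw-bound equality case for (i)--(ii), inclusion--exclusion counts for the vertex numbers, and the device of regenerating a fresh $5$-coclique from any non-adjacency (then re-applying (i)--(ii)) to obtain the valency dichotomy in (iv), the lower bound in (v), and completeness in (iii) via the non-existence of a $6$-coclique. Your explicit partition of $V(\Delta)$ into the private classes $A_i$ and shared classes $B_{ij}$ is only a bookkeeping refinement of the paper's direct counts, not a different argument.
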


\begin{proof} As $\Delta$ has valency $25$ and $80$ vertices, we have 
$$\frac{5(25+1) -80 }{{5 \choose 2}} = 5 = c_2 -1.$$
By Proposition~\ref{clawprop}, statements $(i)$ and $(ii)$ follow.\\
Note that $\bar{\Delta}(y_1, y_2, y_3, y_4) $ has $25 +1 - 4 \times5= 6$ vertices. As $\Delta$ does not contain a coclique with 6 vertices,  $\bar{\Delta}(y_1, y_2, y_3, y_4) $ is a complete graph. This shows $(iii)$.\\
We note that, by $(i)$ and $(ii)$, the graph $\bar{\Delta}(y_1, y_2, y_3)$ has $80 - 3\times(25+1) +
3\times5 = 17$ vertices.

Let $z_1$ be a vertex of $\bar{\Delta}(y_1, y_2, y_3)$. If $z_1$ is adjacent to all other vertices in $\bar{\Delta}(y_1, y_2, y_3)$, then the valency of $z_1$ in $\bar{\Delta}(y_1, y_2, y_3)$ is $16$. So let us assume that there exists a distinct vertex $z_2$ not a neighbor of $z_1$ in $\bar{\Delta}(y_1, y_2, y_3)$. Then $\{y_1, y_2, y_3, z_1, z_2\}$ is a coclique with $5$ vertices in $\Delta$. By $(i)$ and $(ii)$,  both $z_1$ and $z_2$ have valency $10$ inside $\bar{\Delta}(y_1, y_2, y_3)$. This shows $(iv)$.\\
In a similar manner as in $(iv)$, we see that $\bar{\Delta}(y_1, y_2)$ has $80 - 2\times(25+1) +5 = 33$ vertices.
By $(i)$ and $(ii)$, it follows that any vertex of $\bar{\Delta}(y_1, y_2)$ has valency at least $25-2\times5 =15$.
As the number of vertices of $\bar{\Delta}(y_1, y_2)$ is odd, we see, by the handshaking lemma, that $\bar{\Delta}(y_1, y_2)$ has a vertex with even valency. This shows $(v)$.
\end{proof}

From Lemma~\ref{5coclique1} $(v)$, we know that $\bar{\Delta}(y_1,y_2)$ has a vertex with valency at least $16$. In the following lemma, we will study properties of a vertex with valency at least $16$  in $\bar{\Delta}(y_1,y_2)$.

\begin{lemma} \label{5coclique2} Let $\Gamma$ be a distance-regular graph with intersection array \\
$\{80,54,12; 1,6,60\}$. Let $x$ be a vertex of $\Gamma$ and let $\Delta:=\Delta(x)$ be the local graph at $x$. Assume that $\Delta$ contains a coclique $\bar{C}$ with vertices $y_1,y_2,y_3,y_4,y_5$.
Let $z$ be a vertex of $\bar{\Delta}(y_1, y_2)$ with valency at least $16$ (in $\bar{\Delta}(y_1, y_2)$). Then any maximal coclique in $\Delta$ containing $y_1, y_2$ and $z$ has exactly $4$ vertices and 
$z$ is adjacent to exactly two of $y_3, y_4$ and $y_5$. Moreover, if $z$ is adjacent to $y_i$ and $y_j$ for $i\neq j\in\{3,4,5\}$, then $z$ has valency $16$ in $\bar{\Delta}(y_1, y_2, y_h)$ for $h\in \{3,4,5\}\setminus\{i,j\}$.
\end{lemma}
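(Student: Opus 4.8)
The plan is to convert the valency hypothesis on $z$ into the single structural statement that $z$, $y_1$ and $y_2$ cannot all lie in a common coclique of size $5$, and then read off all three conclusions from this. The central computation is the following. Suppose, toward a contradiction, that $\{y_1,y_2,z\}$ is contained in some $5$-coclique $K$ of $\Delta$. Applying Lemma~\ref{5coclique1}$(i)$ to $K$ gives $\mu_{\Delta}(z,y_1)=\mu_{\Delta}(z,y_2)=5$, while Lemma~\ref{5coclique1}$(ii)$ applied to $K$ forces $\Delta(z)\cap\Delta(y_1)\cap\Delta(y_2)=\emptyset$, since any common neighbour of $z,y_1,y_2$ would be adjacent to three vertices of $K$ (here $\Delta(v)$ denotes the neighbourhood of $v$ in $\Delta$). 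Counting by inclusion--exclusion the neighbours of $z$ that lie in $\bar{\Delta}(y_1,y_2)$, and using that every neighbour of $z$ is automatically distinct from $y_1,y_2$ because $z\not\sim y_1,y_2$, the valency of $z$ inside $\bar{\Delta}(y_1,y_2)$ equals
\[
25-\mu_{\Delta}(z,y_1)-\mu_{\Delta}(z,y_2)+\bigl|\Delta(z)\cap\Delta(y_1)\cap\Delta(y_2)\bigr|=25-5-5+0=15 .
\]
As $z$ has valency at least $16$ in $\bar{\Delta}(y_1,y_2)$, this is a contradiction, so no $5$-coclique of $\Delta$ contains all of $y_1,y_2,z$. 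This opening count is the only delicate point of the proof: the two ingredients that make the answer come out to $15<16$ rather than larger are the claw-bound equality, which pins every relevant $\mu_{\Delta}$ to $5$, and the at-most-two-neighbours property, which annihilates the triple intersection; once $15<16$ is in hand the rest is bookkeeping.

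The second assertion now follows quickly. Since $z\in\bar{\Delta}(y_1,y_2)$ is adjacent to neither $y_1$ nor $y_2$, Lemma~\ref{5coclique1}$(ii)$ shows $z$ is adjacent to at most two of $y_3,y_4,y_5$. If it were adjacent to at most one, say $z\not\sim y_3$ and $z\not\sim y_4$, then $\{y_1,y_2,y_3,y_4,z\}$ would be a $5$-coclique containing $y_1,y_2,z$, contradicting the previous paragraph; hence $z$ is adjacent to exactly two of $y_3,y_4,y_5$. Writing $y_h$ for the unique vertex of $\{y_3,y_4,y_5\}$ that is non-adjacent to $z$, the set $\{y_1,y_2,z,y_h\}$ is a coclique, so $\{y_1,y_2,z\}$ is not maximal and every maximal coclique $M$ of $\Delta$ containing it satisfies $|M|\geq 4$. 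On the other hand $|M|\leq 5$ because $\Delta$ has no coclique with $6$ vertices, and $|M|=5$ is impossible by the opening paragraph; therefore $|M|=4$, which establishes the first assertion.

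For the final assertion, assume $z\sim y_i$ and $z\sim y_j$ with $\{i,j,h\}=\{3,4,5\}$, so that $z\not\sim y_h$ and hence $z\in\bar{\Delta}(y_1,y_2,y_h)$. Applying Lemma~\ref{5coclique1}$(iv)$ to the three coclique vertices $y_1,y_2,y_h$, the graph $\bar{\Delta}(y_1,y_2,y_h)$ has $17$ vertices and every vertex has valency $10$ or $16$, a vertex having valency $16$ precisely when it is adjacent to all other vertices of $\bar{\Delta}(y_1,y_2,y_h)$. If $z$ had valency $10$, there would exist a vertex $z_2\in\bar{\Delta}(y_1,y_2,y_h)$ with $z_2\not\sim z$; then $\{y_1,y_2,y_h,z,z_2\}$ would be a $5$-coclique containing $y_1,y_2,z$, once more contradicting the opening paragraph. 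Hence $z$ has valency $16$ in $\bar{\Delta}(y_1,y_2,y_h)$, completing the proof.
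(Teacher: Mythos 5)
Your proposal is correct and follows essentially the same route as the paper: both reduce everything to the observation that a $5$-coclique through $y_1,y_2,z$ would force $z$ to have valency exactly $15$ in $\bar{\Delta}(y_1,y_2)$ (you spell out the inclusion--exclusion with parts $(i)$ and $(ii)$ of Lemma~\ref{5coclique1}, which the paper leaves implicit), and then both derive the three conclusions from the resulting non-extendability, using Lemma~\ref{5coclique1}~$(iv)$ for the valency-$16$ claim. The only cosmetic difference is that you exhibit $y_h$ as an explicit fourth coclique vertex where the paper instead invokes the counting bound $3\times 26<80$, and you prove the adjacency claim before the maximality claim rather than after.
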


\begin{proof}
If there exists a maximal coclique in $\Delta$ with $5$ vertices containing $y_1$, $y_2$ and $z$, then by Lemma~\ref{5coclique1} $(i)$ the valency of $z$ in $\bar{\Delta}(y_1,y_2)$ is 15, and this contradicts that $z$ has valency at least $16$  in $\bar{\Delta}(y_1,y_2)$. So, any coclique in $\Delta$ containing $y_1,y_2$ and $z$ has at most $4$ vertices. As $26\times3<80$,  any coclique in $\Delta$ containing $y_1,y_2$ and $z$ has exactly $4$ vertices.  By Lemma~\ref{5coclique1} $(ii)$, we know that $z$ is adjacent to at most two of $y_3, y_4$ and $y_5$. If $z$ is adjacent to at most one of $y_3,y_4$ and $y_5$, then there exists a coclique in $\Delta$ with $5$ vertices containing $y_1,y_2$ and $z$, a contradiction. This shows that $z$ is adjacent to exactly two of $y_3, y_4$ and $y_5$. Without loss of generality, we assume that $z$ is adjacent to $y_4$ and $y_5$, i.e., $z$ is a vertex of $\bar{\Delta}(y_1, y_2, y_3)$. 
As $\{ y_1, y_2, y_3, z\}$ is a maximal coclique in $\Delta$ with $4$ vertices, $z$ is adjacent to all vertices of $\bar{\Delta}(y_1,y_2,y_3)$. From Lemma~\ref{5coclique1} $(iv)$, we know that $\bar{\Delta}(y_1, y_2, y_3)$ has $17$ vertices. This means that  $z$ has valency $16$ in $\bar{\Delta}(y_1, y_2, y_3)$.
\end{proof}

Assume that there exist three vertices $z_1,z_2$ and $z_3$ of $\Delta$ such that there does not exist a coclique with $5$ vertices containing all of them. Then it is easy to see that $\bar{\Delta}(z_1,z_2,z_3)$ is a complete graph. In the following lemma, we study the size of the complete graph $\bar{\Delta}(z_1,z_2,z_3)$.


\begin{lemma}\label{5coclique3}  Let $\Gamma$ be a distance-regular graph with intersection array \\
$\{80,54,12; 1,6,60\}$. Let $x$ be a vertex of $\Gamma$ and let $\Delta:=\Delta(x)$ be the local graph at $x$. Assume that there exist three vertices $z_1,z_2$ and $z_3$ of $\Delta$ such that it can not be extended to a coclique with $5$ vertices. Let $z_4$ be a vertex of $\bar{\Delta}(z_1, z_2, z_3)$ with valency $t$ in $\bar{\Delta}(z_1, z_2)$ for some integer $t$. Then $t \geq 15$ and $z_4$ lies in a clique $C$ of $\bar{\Delta}(z_1, z_2, z_3)$ with at least $t-4 \geq 11$ vertices. 
\end{lemma}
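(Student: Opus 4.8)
The plan is to combine three facts: $\Delta$ is regular of valency $25$; any two non-adjacent vertices of $\Delta$ have at most $5$ common neighbours; and $\bar{\Delta}(z_1,z_2,z_3)$ is a complete graph. The bound $\mu_{\Delta}(u,v)\le 5$ for non-adjacent $u,v\in\Delta$ is unconditional: such $u,v$ lie at distance $2$ in $\Gamma$, so they have $c_2=6$ common neighbours in $\Gamma$, exactly one of which is the base vertex $x$; hence at most $5$ of them lie in $\Delta$. The completeness of $\bar{\Delta}(z_1,z_2,z_3)$ is the observation recorded just before the lemma, using that $z_1,z_2,z_3$ are pairwise non-adjacent: two non-adjacent vertices of $\bar{\Delta}(z_1,z_2,z_3)$ would, together with $z_1,z_2,z_3$, form a coclique of size $5$, contradicting the hypothesis. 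Note also that $z_4\in\bar{\Delta}(z_1,z_2,z_3)\subseteq\bar{\Delta}(z_1,z_2)$, so its valency $t$ in $\bar{\Delta}(z_1,z_2)$ is well defined, and $z_4$ is non-adjacent to each of $z_1,z_2,z_3$ and distinct from them.

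First I would prove $t\ge 15$. Since $z_4$ is non-adjacent to both $z_1$ and $z_2$, none of its $25$ neighbours in $\Delta$ equals $z_1$ or $z_2$; a neighbour of $z_4$ fails to lie in $\bar{\Delta}(z_1,z_2)$ precisely when it is adjacent to $z_1$ or to $z_2$. The number of neighbours of $z_4$ adjacent to $z_1$ is $\mu_{\Delta}(z_1,z_4)\le 5$, and likewise at most $5$ are adjacent to $z_2$. Hence at most $10$ of the $25$ neighbours are lost, giving $t\ge 25-10=15$.

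Next I would locate the clique. Passing from $\bar{\Delta}(z_1,z_2)$ to $\bar{\Delta}(z_1,z_2,z_3)$, a neighbour $w$ of $z_4$ in $\bar{\Delta}(z_1,z_2)$ drops out exactly when $w$ is adjacent to $z_3$ (it cannot equal $z_3$, since $z_4\not\sim z_3$). Such $w$ are common neighbours of $z_3$ and $z_4$, and since $z_3\not\sim z_4$ there are at most $\mu_{\Delta}(z_3,z_4)\le 5$ of them. Therefore at least $t-5$ of the $t$ neighbours of $z_4$ lie in $\bar{\Delta}(z_1,z_2,z_3)$. As $\bar{\Delta}(z_1,z_2,z_3)$ is complete and contains $z_4$, the set $C$ consisting of $z_4$ together with these neighbours is a clique of $\bar{\Delta}(z_1,z_2,z_3)$ with at least $(t-5)+1=t-4\ge 11$ vertices, which is exactly the assertion.

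The argument is essentially bookkeeping, so the only place demanding care is the transition between the three non-neighbourhoods: one must verify that the neighbours of $z_4$ lost upon imposing the constraint from $z_3$ are counted by $\mu_{\Delta}(z_3,z_4)$ alone, and in particular that $z_3$ itself is not among the neighbours of $z_4$ (which holds because $z_4\not\sim z_3$). Everything else reduces to the uniform bound $\mu_{\Delta}\le 5$ together with the regularity of $\Delta$ and the completeness of $\bar{\Delta}(z_1,z_2,z_3)$, so I expect no serious obstacle.
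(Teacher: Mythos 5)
Your proof is correct and follows essentially the same route as the paper: bound $\mu_{\Delta}\le c_2-1=5$ for non-adjacent vertices, deduce $t\ge 25-2\cdot 5=15$, note that at most $5$ further neighbors of $z_4$ are lost when passing to $\bar{\Delta}(z_1,z_2,z_3)$, and conclude using completeness of $\bar{\Delta}(z_1,z_2,z_3)$. Your write-up is merely more explicit than the paper's about the bookkeeping (excluding $z_1,z_2,z_3$ themselves and attributing the lost neighbors to the $\mu$-bounds), which is a virtue, not a deviation.
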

\begin{proof}
As $c_2=6$, we know that $z_4$ has at most 5 common neighbors with each of $z_1, z_2$ and $z_3$. This shows that $z_4$ has valency $t\geq15$ in $\bar{\Delta}(z_1, z_2)$ and that $\bar{\Delta}(z_1, z_2, z_3)$ has at least $1 + t -5 \geq 11$ vertices. Note that the graph $\bar{\Delta}(z_1, z_2, z_3)$ is a complete graph as $z_1, z_2, z_3$ do not lie in a coclique with $5$ vertices. This means that a clique $C$ of $\bar{\Delta}(z_1, z_2, z_3)$ containing $z_4$ has at least $t-4\geq11$ vertices.
\end{proof}

In the following proposition we will show that a local graph of the distance-regular graph with intersection array $\{80,54,12;1,6,60\}$ does not have a coclique with $5$ vertices as an induced subgraph.

\begin{proposition}\label{5coclique}
 Let $\Gamma$ be a distance-regular graph with intersection array \\
$\{80,54,12; 1,6,60\}$. Let $x$ be a vertex of $\Gamma$ and let $\Delta:=\Delta(x)$ be the local graph at $x$. Then the graph $\Delta$ does not contain a coclique with $5$ vertices.
\end{proposition}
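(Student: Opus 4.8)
\section*{Proof proposal}

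The plan is to assume, for contradiction, that $\Delta$ contains a coclique $\bar{C}=\{y_1,y_2,y_3,y_4,y_5\}$ and then to manufacture two large cliques that cannot coexist. By Lemma~\ref{5coclique1}$(v)$ the graph $\bar{\Delta}(y_1,y_2)$ has $33$ vertices, minimum valency $15$, and a vertex of even valency; since $15$ is odd, such a vertex $z$ has valency at least $16$. By Lemma~\ref{5coclique2}, $z$ is adjacent to exactly two of $y_3,y_4,y_5$---say $z\not\sim y_3$---and no coclique of size $5$ contains $\{y_1,y_2,z\}$.

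First I would extract a clique from $z$. Applying Lemma~\ref{5coclique3} to the triple $\{y_1,y_2,z\}$, the graph $K:=\bar{\Delta}(y_1,y_2,z)$ is complete, i.e. a clique. Since $y_3\in K$ has valency $15$ in $\bar{\Delta}(y_1,y_2)$, the same lemma gives $|K|\geq 11$. On the other hand $K$ is a clique containing $y_3$, so $K\setminus\{y_3\}$ is contained in the set $N$ of the $15$ neighbours of $y_3$ in $\bar{\Delta}(y_1,y_2)$; hence $11\leq |K|\leq 16$ and $K$ lies inside the $16$-element set $S:=\{y_3\}\cup N$.

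The contradiction I aim for comes from a second clique inside $S$. If some other vertex $z'$ of $\bar{\Delta}(y_1,y_2)$ has valency at least $16$ and is again non-adjacent to $y_3$, then $K':=\bar{\Delta}(y_1,y_2,z')$ is a second clique with $11\leq |K'|\leq 16$ and $K'\subseteq S$. Extending $K$ and $K'$ to maximal cliques of $\Delta$ and invoking Lemma~\ref{clique}$(iii)$ forces $|K\cap K'|\leq 4$; but $K,K'\subseteq S$ with $|S|=16$ gives $|K\cap K'|\geq |K|+|K'|-16\geq 6$, a contradiction. So the whole game is to show that two high-valency vertices of $\bar{\Delta}(y_1,y_2)$ must share a common non-neighbour among $y_3,y_4,y_5$.

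The main obstacle is exactly this last point: controlling the number and distribution of the high-valency vertices. I would analyse $\bar{\Delta}(y_1,y_2,y_3)$, which by Lemma~\ref{5coclique1}$(iv)$ splits as a clique $A$ (its valency-$16$ vertices, which are precisely the vertices of $\bar{\Delta}(y_1,y_2)$ of valency $\geq 16$ that miss $y_3$) joined to a regular graph $B$ (its valency-$10$ vertices); if $|A|\geq 2$ we are done as above, and the same applies with $y_3$ replaced by $y_4$ or $y_5$. The remaining configurations---where each of $y_3,y_4,y_5$ is missed by at most one high-valency vertex, so that there are at most three such vertices in all---I would eliminate by an edge count between $B$ and $S$: every vertex of $B$ has exactly $5$ neighbours in $S$, producing $5|B|$ edges into $S$, and this has to be reconciled with the fact that $K\subseteq S$ is a clique of size $\geq 11$ while almost every vertex of $S$ has valency exactly $15$. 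Squeezing these cases with the clique bounds of Lemma~\ref{clique} is where the real work lies, and I expect the tightest case, namely a single high-valency vertex of large valency, to demand the most careful bookkeeping.
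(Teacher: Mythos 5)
Your plan breaks precisely at the one step where you actually claim a contradiction. Lemma~\ref{clique}$(iii)$ bounds the intersection of two \emph{distinct} maximal cliques, and you apply it to maximal cliques $M\supseteq K$ and $M'\supseteq K'$ without ruling out $M=M'$; in the tightest (and most natural) case this is exactly what happens. Indeed, by Lemma~\ref{5coclique2}, each of $z,z'$ is adjacent to all $16$ other vertices of the $17$-vertex graph $\bar{\Delta}(y_1,y_2,y_3)$. If $z$ has valency exactly $16$ in $\bar{\Delta}(y_1,y_2)$, then all of its neighbours lie in $\bar{\Delta}(y_1,y_2,y_3)$, so its non-neighbours in $\bar{\Delta}(y_1,y_2)$ are precisely the $16$ vertices of your set $S$; hence $K=\bar{\Delta}(y_1,y_2,z)=S$, and likewise $K'=S$ when $z'$ also has valency exactly $16$. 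Then $K=K'=S$ is a single $16$-clique, both extend to the same maximal clique, $|K\cap K'|=16$, and no contradiction arises. In other words, the configuration ``two valency-$16$ vertices in $\bar{\Delta}(y_1,y_2,y_3)$'' is not locally inconsistent, so no bookkeeping of the kind you propose can exclude it; the paper's auxiliary graph $\Sigma$ records only the \emph{existence} of a valency-$16$ vertex per triple, and indeed in the paper's pentagon case each $y_i$ does end up inside a $16$-clique of exactly this shape, with the final contradiction requiring a further argument (a vertex $u_4$ adjacent to all of two such cliques, against Lemma~\ref{clique}$(iv)$), not the mere existence of the $16$-clique.

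Beyond this, the proposal is a program rather than a proof: the configurations in which each of $y_3,y_4,y_5$ is missed by at most one high-valency vertex are, by your own admission, ``where the real work lies,'' and the proposed edge count between $B$ and $S$ is never carried out. For comparison, the paper resolves exactly these cases by leaving the single pair $\{y_1,y_2\}$ and working globally: it defines a graph $\Sigma$ on $\{1,\dots,5\}$ whose edges record which triples $\bar{\Delta}(y_p,y_q,y_r)$ contain a valency-$16$ vertex, shows $\Sigma$ has no coclique of size $3$ (Lemmas~\ref{5coclique1}$(v)$ and~\ref{5coclique2}) and no triangle (a counting contradiction $3\cdot 11+3=36>33$ inside $\bar{\Delta}(y_4,y_5)$), concludes by a Ramsey-type argument that $\Sigma$ is a pentagon, and then kills the pentagon by gluing $11$-cliques coming from two different triples into $16$-cliques and invoking Lemma~\ref{clique}$(iv)$. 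Your analysis confined to $\bar{\Delta}(y_1,y_2)$ discards the cross-pair information that drives both of those steps, so even with the counting completed, the remaining cases are unlikely to close by this route.
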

\begin{proof}
We recall that $\Delta$ does not contain a coclique with $6$ vertices (Propsotion~\ref{clawprop}). Suppose that $\Delta$ contains a coclique with $5$ vertices $y_1,y_2,y_3,y_4$ and $y_5$. Let us consider a graph $\Sigma$ on $5$ vertices $1,2,3,4$ and $5$, where two vertices $i$ and $j$ are adjacent  if $i \neq j$ and 
 there exists a vertex of $\bar{\Delta}(y_p,y_q,y_r)$ with valency 16 for $\{ p,q,r\} = \{1,2,3,4,5\}\setminus \{i,j\}$. 

 Let $i,j$ and $r$ be distinct three vertices of $\Sigma$. From Lemma~\ref{5coclique1} $(v)$, we know that $\bar{\Delta}(y_p,y_q)$ contains a vertiex $z$ with valency at least $16$, where  $\{ p,q\} = \{1,2,3,4,5\}\setminus \{i,j,r\}$.  By Lemma~\ref{5coclique2}, we know that $z$ is adjacent to exactly two of $y_i,y_j$ and $y_r$. Without loss of generality, we may assume that $z$ is not adjacent to $y_r$. Also Lemma~\ref{5coclique2} implies that $z$ has valency $16$ in $\bar{\Delta}(y_p,y_q,y_r)$. Then two vertices $i$ and $j$ of $\Sigma$ are adjacent, i.e., there exists an edge between arbitrary chosen three vertices of $\Sigma$. This shows that $\Sigma$ does not contain a coclique with 3 vertices, and hence $\Sigma$ is not bipartite.

We will show that $\Sigma$ is triangle-free. Assume that $\Sigma$ contains a triangle, say on $\{1,2,3\}$. Then there exist three distinct vertices $z_1, z_2$ and $z_3$ of $\bar{\Delta}(y_4,y_5)$ such that each of $z_1,z_2$ and $z_3$ has valency at least $16$ and $z_i$ is adjacent to $y_j$ if  $j\in\{1,2,3\}\setminus\{i\}$. By Lemma~\ref{5coclique2}, we know that any maximal coclique containing $z_i, y_4$ and $y_5$ $(i=1,2,3)$ has exactly $4$ vertices and that $z_i$ is adjacent to all vertices of $\bar{\Delta}(y_i,y_4,y_5)$. By Lemma~\ref{5coclique3}, each of $y_1,y_2$ and $y_3$ lies in a clique $C_i$ of $\bar{\Delta}(z_i,y_4,y_5)$ with (at least) $11$ vertices. Note that $z_i$ and $z_j$ $(i\neq j\in\{1,2,3\})$ are adjacent to all vertices of $\bar{\Delta}(y_i,y_j,y_4,y_5)$, where $\bar{\Delta}(y_i,y_j,y_4,y_5)$ is a clique with $6$ vertices (see, Lemma~\ref{5coclique1} $(iii)$). As $z_i$ and $z_j$ $(i\neq j\in\{1,2,3\})$ have at least $6$ common neighbors, $z_1, z_2$ and $z_3$ are pairwise adjacent, and hence, none of $z_1,z_2$ and $z_3$ lies in any of $C_1,C_2$ and $C_3$.

Now we extend $C_i$ $(i=1,2,3)$ to a maximal clique $D_i$ inside $\bar{\Delta}(y_4,y_5)$. Assume that $\{i,j,k\}=\{1,2,3\}$. Note that $y_i$ is in $D_i$ but $y_j$ and $y_k$ are not in $D_i$. This means that $D_1,D_2$ and $D_3$ are all distinct and that $y_i$ has at most $5$ neighbors in each of $D_j$ and $D_k$. As $u_i$ is adjacent to all vertices of $\bar{\Delta}(y_i,y_4,y_5)$, $u_i$ has at least $6$ neighbors in each of $D_j$ and $D_k$, and hence $u_i$ is in $D_j\cap D_k$. So, $u_i$ is adjacent to all vertices in $C_j$ and $C_k$. This shows that $C_1,C_2$ and $C_3$ are disjoint as $C_i$ is a clique of $\bar{\Delta}(u_i,y_4,y_5)$. As $\bar{\Delta}(y_4,y_5)$ contains $C_1, C_2, C_3, z_1, z_2$ and $z_3$, $\bar{\Delta}(y_4,y_5)$ has at least $3\times 11+3=36$ vertices. This contradicts Lemma~\ref{5coclique1} $(v)$. Thus,  $\Sigma$ has no triangles. 

As $\Sigma$ is triangle-free and does not contain a coclique with $3$ vertices, it is easy to see that $\Sigma$ must be a pentagon. Without loss of generality, we may assume that  $1 \sim 2\sim 3 \sim 4 \sim 5$ and $1 \sim 5$. Let $u_i$ be a vertex of $\bar{\Delta}(y_{i-1}, y_i, y_{i+1})$ with valency 16 inside $\bar{\Delta}(y_{i-1}, y_i, y_{i+1})$ for $i =1,2,3,4,5$, where $y_0 = y_5$ and $y_6 = y_1$.  By Lemma~\ref{5coclique3} and the fact that $\{u_1, y_1, y_5\}$ is a coclique that can not be extended to a coclique with $5$ vertices, we see that $y_ 2$ lies in a clique $C_2'$ with $11$ vertices inside $\bar{\Delta}(u_1, y_1, y_5)$. In a similar fashion, we see that $y_2$ lies in a clique $C_2''$ with $11$ vertices inside $\bar{\Delta}(u_3, y_3, y_4)$.
As $\bar{\Delta}(u_1, y_1, y_5)$ and $\bar{\Delta}(u_3, y_3, y_4)$ has exactly 6 common vertices, namely the vertices of $\bar{\Delta}(y_1, y_3, y_4, y_5)$. That is, the induced subgraph on
$V(C_2') \cup V(C_2'')$ is complete. This means that $y_2$ lies in a clique $C_2$ with $16$ vertices.
This shows that $y_i$ lies in a clique $C_i$ with $16$ vertices for $i =1, 2, 3, 4, 5$.
But $u_4$ is adjacent to all vertices of $C_1$ and $C_2$. That is, there are two maximal cliques with at least 16 vertices that intersect. This contradicts Lemma \ref{clique}  $(iv)$. Thus, $\Delta$ does not contain a coclique with $5$ vertices.
\end{proof}

\section{No distance-regular graph with intersection array $\{80,54,12; 1,6,60\}$}
In this section we will first show that any local graph $\Delta$ of a distance-regular graph $\Gamma$ with intersection array $\{80,54,12; 1,6,60\}$ consists of $4$ disjoint cliques with 20 vertices, i.e., the graph $\Gamma$ is geometric. Then we will prove that there does not exist the distance-regular graph $\Gamma$.

By Proposition \ref{5coclique} and the fact that $3(25+1) < 80$ we find that all maximal cocliques in $\Delta$ have $4$ vertices. In the following lemma, for a given coclique in $\Delta$ with vertices $x_1,x_2,x_3$ and $x_4$, we consider $\bar{\Delta}(x_i,x_j,x_p)$, where $i,j,p\in\{1,2,3,4\}$.

\begin{lemma}\label{maxclique}
Let $\Gamma$ be a distance-regular graph with intersection array \\
$\{80,54,12; 1,6,60\}$. Let $x$ be a vertex of $\Gamma$ and let $\Delta:=\Delta(x)$ be the local graph at $x$. Let $\bar{C}$ be a coclique in $\Delta$ with distinct vertices $x_1, x_2, x_3$ and $x_4$. Then for $j, p, q\in\{1,2,3,4\}$, $\bar{\Delta}(x_j,x_p,x_q)$ is a clique with at least $11$ vertices containing $x_i$ and there is the unique maximal clique in $\Delta$ containing $\bar{\Delta}(x_j,x_p,x_q)$, where $\{i,j,p,q\}=\{1,2,3,4\}$.
\end{lemma}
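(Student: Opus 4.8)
The plan is to prove the three assertions of the lemma in sequence, drawing on Proposition~\ref{5coclique} (that $\Delta$ has no coclique with $5$ vertices) together with the clique-size and clique-intersection bounds of Lemma~\ref{clique}. Throughout I fix the partition $\{i,j,p,q\}=\{1,2,3,4\}$. First I would note that $x_i\in\bar{\Delta}(x_j,x_p,x_q)$: since $\bar{C}$ is a coclique, $x_i$ is non-adjacent to each of $x_j,x_p,x_q$ and is distinct from all of them, so by the definition of $\bar{\Delta}$ it lies in $\bar{\Delta}(x_j,x_p,x_q)$. Next I would show that $\bar{\Delta}(x_j,x_p,x_q)$ is a clique. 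If it contained two non-adjacent vertices $z$ and $z'$, then each of $z,z'$ is non-adjacent to all of $x_j,x_p,x_q$ and distinct from them, so $\{x_j,x_p,x_q,z,z'\}$ would be a coclique with $5$ vertices in $\Delta$, contradicting Proposition~\ref{5coclique}. Hence any two vertices of $\bar{\Delta}(x_j,x_p,x_q)$ are adjacent, so it is a clique.

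For the size bound I would invoke Lemma~\ref{5coclique3} with $(z_1,z_2,z_3)=(x_j,x_p,x_q)$ and $z_4=x_i$. Because $\Delta$ has no coclique with $5$ vertices, the triple $\{x_j,x_p,x_q\}$ certainly cannot be extended to a coclique with $5$ vertices, so the hypotheses of Lemma~\ref{5coclique3} are met; it produces a clique inside $\bar{\Delta}(x_j,x_p,x_q)$ with at least $11$ vertices. Since $\bar{\Delta}(x_j,x_p,x_q)$ is itself a clique by the previous step, it follows that $\bar{\Delta}(x_j,x_p,x_q)$ has at least $11$ vertices.

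Finally I would establish uniqueness of the maximal clique. Existence is immediate, as every clique of a finite graph extends to a maximal one. Suppose $M_1$ and $M_2$ were two distinct maximal cliques of $\Delta$, each containing $\bar{\Delta}(x_j,x_p,x_q)$. Then $M_1\cap M_2\supseteq\bar{\Delta}(x_j,x_p,x_q)$ has at least $11$ vertices, and in particular both $M_1$ and $M_2$ have at least $11$ vertices. This contradicts Lemma~\ref{clique}~$(iii)$, which forces two maximal cliques of size at least $11$ to meet in at most $4$ vertices. Hence $M_1=M_2$, and the maximal clique containing $\bar{\Delta}(x_j,x_p,x_q)$ is unique.

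The argument is short because each ingredient is supplied by an earlier result; the one point that needs care is the \emph{sequencing} of the two halves of Lemma~\ref{clique}. The essential observation is that the no-$5$-coclique property collapses $\bar{\Delta}(x_j,x_p,x_q)$ into a \emph{single} clique, and only once we know this clique has at least $11$ vertices can we feed two competing maximal extensions into the intersection bound of Lemma~\ref{clique}~$(iii)$. Thus the main (modest) obstacle is to verify that the large lower bound of $11$ from Lemma~\ref{5coclique3} genuinely applies to the whole set $\bar{\Delta}(x_j,x_p,x_q)$ rather than merely to a subclique, which is exactly what the clique property of the second step guarantees.
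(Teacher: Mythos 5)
Your proof is correct, and it follows the paper for the first two assertions while taking a different route for uniqueness, so a brief comparison is worthwhile. Membership of $x_i$ and the clique property of $\bar{\Delta}(x_j,x_p,x_q)$ are argued exactly as in the paper (two non-adjacent vertices there would extend $\{x_j,x_p,x_q\}$ to a $5$-coclique, contradicting Proposition~\ref{5coclique}). For the size bound, the paper just redoes the count directly: $x_i$ shares at most $c_2-1=5$ neighbors with each of $x_j,x_p,x_q$, so $\bar{\Delta}(x_j,x_p,x_q)$ has at least $1+25-15=11$ vertices; you instead cite Lemma~\ref{5coclique3}, which is legitimate since $\{x_j,x_p,x_q\}$ is a coclique that, by Proposition~\ref{5coclique}, trivially cannot be extended to a $5$-coclique, and that lemma encapsulates the same counting. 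The genuine divergence is the uniqueness step. The paper is constructive: it takes the set of all vertices that lie in $\bar{\Delta}(x_j,x_p,x_q)$ or are adjacent to all of it, observes that any two such vertices have more than $5$ common neighbors and hence must be adjacent (non-adjacent vertices of $\Delta$ share at most $c_2-1=5$ neighbors), so this set is itself a clique containing every clique that contains $\bar{\Delta}(x_j,x_p,x_q)$; this exhibits the unique maximal clique explicitly. You argue by contradiction via Lemma~\ref{clique}~$(iii)$: two distinct maximal cliques through $\bar{\Delta}(x_j,x_p,x_q)$ would each have at least $11$ vertices yet intersect in at least $11$ vertices, violating the intersection bound of at most $4$. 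Your version is shorter, though it imports the interlacing-based refinement of Lemma~\ref{clique}~$(iii)$ where the cruder fact already inside its proof (distinct maximal cliques meet in at most $c_2-1=5$ vertices) would do; the paper's construction costs a few extra lines but yields an explicit description of the clique $D(x_i)$. Both arguments ultimately rest on the same $\mu_{\Delta}\leq c_2-1=5$ fact, so the difference is in packaging rather than substance, but your final step does rely on a different key lemma than the paper's.
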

\begin{proof}
It is easy to see that $\bar{\Delta}(x_j,x_p,x_q)$ has at least $11$ vertices containing $x_i$ as $x_i$ and $x_t$ has at most $5$ common neighbors in $\Delta$ for each $t\in\{j,p,q\}$. Also, any two vertices in $\bar{\Delta}(x_j,x_p,x_q)$ are adjacent otherwise there exists a coclique with $5$ vertices. So, $\bar{\Delta}(x_j,x_p,x_q)$ is a clique in $\Delta$ with at least $11$ vertices containing $x_i$. 

Now we consider vertices of $\Delta$ that are either in $\bar{\Delta}(x_j,x_p,x_q)$ or adjacent to all vertices in $\bar{\Delta}(x_j,x_p,x_q)$. Then any two vertices of them are adjacent as they have at least $11$ common neighbors, and hence those vertices induce a clique in $\Delta$ containing $\bar{\Delta}(x_j,x_p,x_q)$. Conversely, a vertex of a clique in $\Delta$ containing $\bar{\Delta}(x_j,x_p,x_q)$ is either in $\bar{\Delta}(x_j,x_p,x_q)$ or adjacent to all vertices in $\bar{\Delta}(x_j,x_p,x_q)$. So, the clique in $\Delta$ with vertex set $\{x\mid  x {\rm ~is~ adjacent~ to~ all~ vertices~ in~} \bar{\Delta}(x_j,x_p,x_q),{\rm~ or~} x \in \bar{\Delta}(x_j,x_p,x_q)\}$ is the unique maximal clique in $\Delta$ containing $\bar{\Delta}(x_j,x_p,x_q)$.
\end{proof}

We denote the unique maximal clique in $\Delta$ containing $\bar{\Delta}(x_j,x_p,x_q)$ by $D(x_i):=D_{\bar{C}}(x_i)$, where $\{i,j,p,q\}=\{1,2,3,4\}$ and  $x_1, x_2, x_3$ and $x_4$ are the vertices of a maximal coclique in $\Delta$. And we denote the number of vertices in $D(x_i)$ by $d(x_i):=d_{\bar{C}}(x_i)$ for $i=1,2,3,4$. By Lemma~\ref{maxclique}, we see that  $d(x_i)\geq11$ for $i=1,2,3,4$. Without loss of generality we assume that $d(x_1)\leq d(x_2)\leq d(x_3)\leq d(x_4)$.



 In the following lemma, for a given coclique in $\Delta$ with vertices $x_1,x_2,x_3$ and $x_4$, we study sizes of $D(x_i)'s$.

\begin{lemma}\label{disjoint1}  Let $\Gamma$ be a distance-regular graph with intersection array \\
$\{80,54,12; 1,6,60\}$. Let $x$ be a vertex of $\Gamma$ and let $\Delta:=\Delta(x)$ be the local graph at $x$. Let $\bar{C}$ be a coclique in $\Delta$ with distinct vertices $x_1, x_2, x_3$ and $x_4$. Then for $i\neq j\in \{1,2,3,4\}$, $\bar{\Delta}(x_i, x_j)$ can be partitioned into two cliques such that each of the two cliques has at least $11$ vertices. In particular, $D(x_i)$ and $D(x_j)$ are disjoint and $d(x_i)+d(x_j) \geq 28$ holds for $i\neq j\in\{1,2,3,4\}$.
\end{lemma}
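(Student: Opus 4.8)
The plan is to fix a pair, say $(x_1,x_2)$, and to build the two cliques as the traces of the maximal cliques $D(x_3)$ and $D(x_4)$ on $\bar{\Delta}(x_1,x_2)$; the case of a general pair then follows by relabelling. First I would record the basic structure of $\bar{\Delta}(x_1,x_2)$. Its independence number is at most $2$, since three pairwise non-adjacent vertices of $\bar{\Delta}(x_1,x_2)$ together with $x_1,x_2$ would form a coclique with $5$ vertices, contradicting Proposition~\ref{5coclique}; in particular every vertex of $\bar{\Delta}(x_1,x_2)$ is adjacent to at least one of $x_3,x_4$, as otherwise it would extend $\{x_1,x_2,x_3,x_4\}$ to a $5$-coclique. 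Sorting the vertices by their adjacency to $x_3$ and $x_4$ gives $\bar{\Delta}(x_1,x_2)=A\sqcup B\sqcup R$, where $A=\bar{\Delta}(x_1,x_2,x_3)\ni x_4$ and $B=\bar{\Delta}(x_1,x_2,x_4)\ni x_3$ are, by Lemma~\ref{maxclique}, cliques with at least $11$ vertices lying inside $D(x_4)$ and $D(x_3)$ respectively (note $A\cap B=\bar{\Delta}(x_1,x_2,x_3,x_4)=\emptyset$, again by Proposition~\ref{5coclique}), and $R$ is the set of vertices adjacent to both $x_3$ and $x_4$ but to neither $x_1$ nor $x_2$.

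The heart of the argument, and the step I expect to be hardest, is to show that every $z\in R$ lies in $D(x_3)$ or in $D(x_4)$, i.e. that $z$ is adjacent to all of $B$ or to all of $A$. Suppose not, so $z$ misses some $a\in A$ and some $b\in B$. Then $\{x_1,x_2,a,b,z\}$ would be a coclique with $5$ vertices unless $a\sim b$, so Proposition~\ref{5coclique} forces $a\sim b$; hence the sets $A_z=\{a\in A: a\not\sim z\}$ and $B_z=\{b\in B: b\not\sim z\}$ are completely joined, and each $a\in A_z$ is a common neighbour of $x_4$ and a fixed $b\in B_z$, which are non-adjacent, so $|A_z|,|B_z|\le c_2-1=5$. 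Viewing $\{x_1,x_2,z,a\}$ as a second maximal coclique, Lemma~\ref{maxclique} makes $\bar{\Delta}(x_1,x_2,z)$ a clique with at least $11$ vertices whose intersection with the clique $A$ is exactly $A_z$; since both cliques have at least $11$ vertices, Lemma~\ref{clique}$(iii)$ already bounds $|A_z|$, and combining this with the smallest-eigenvalue estimate of Lemma~\ref{clique}$(i)$ (through an interlacing computation of the type used in the proof of Lemma~\ref{clique}$(iii)$) is where I expect the real work to lie in extracting a contradiction. The outcome of this step is $R\subseteq D(x_3)\cup D(x_4)$.

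With this in hand the partition is immediate: $\bar{\Delta}(x_1,x_2)=\bigl(D(x_4)\cap\bar{\Delta}(x_1,x_2)\bigr)\sqcup\bigl(D(x_3)\cap\bar{\Delta}(x_1,x_2)\bigr)$, where each piece is a clique (being contained in $D(x_4)$, resp. $D(x_3)$) and contains $A$, resp. $B$, hence has at least $11$ vertices. The two pieces are disjoint, because a vertex lying in both would be adjacent to every other vertex of $\bar{\Delta}(x_1,x_2)$, that is to at least $28+\mu_{\Delta}(x_1,x_2)-1\ge 27$ vertices, exceeding the valency $25$ of $\Delta$. This establishes the partition statement, and since the two pieces are disjoint and cover $\bar{\Delta}(x_1,x_2)$ it also gives $d(x_3)+d(x_4)=|D(x_3)|+|D(x_4)|\ge |D(x_3)\cap\bar{\Delta}(x_1,x_2)|+|D(x_4)\cap\bar{\Delta}(x_1,x_2)|=|\bar{\Delta}(x_1,x_2)|=28+\mu_{\Delta}(x_1,x_2)\ge 28$.

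It remains to prove the full disjointness $D(x_3)\cap D(x_4)=\emptyset$. A common vertex $w$ would be adjacent to all of $D(x_3)\cup D(x_4)$; the subcase $w\not\sim x_1,x_2$ is excluded exactly as above (valency $25$ against at least $27$ forced neighbours), while if $w$ is adjacent to $x_1$ or $x_2$ the valency count combined with the bound $d(x_3)+d(x_4)\ge 28$ forces $|D(x_3)\cap D(x_4)|\ge 3$ with both cliques of size at most $18$, which contradicts the smallest eigenvalue $-3$ of Lemma~\ref{clique}$(i)$ via the interlacing estimate of Lemma~\ref{clique}$(iii)$. Relabelling the indices then yields the statement for every pair. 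The two points I expect to be the main obstacle are the contradiction in the treatment of the exceptional set $R$ and the boundary cases of the interlacing estimate in the disjointness step, both of which hinge on controlling how many vertices of a size-$\ge 11$ clique a given vertex can see, using the $c_2-1=5$ bound on common neighbours of non-adjacent pairs together with the eigenvalue bound $-3$.
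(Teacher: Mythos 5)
Your skeleton matches the paper's: split $\bar{\Delta}(x_1,x_2)$ into $A\sqcup B\sqcup R$, show every vertex of $R$ falls into the maximal clique through $A$ or through $B$, then read off the partition, the bound $d(x_3)+d(x_4)\geq 28$, and disjointness from the valency count. But the step you yourself call the heart of the argument is never actually proved: after deriving that $A_z$ and $B_z$ are completely joined and of size at most $5$, you defer the contradiction to ``an interlacing computation of the type used in Lemma~\ref{clique}$(iii)$'' without setting one up, and there is no pair of large maximal cliques with an over-large intersection in sight for that lemma to apply to. The missing idea is much more elementary and is what the paper uses: $R$ itself is small, because every vertex of $R$ is a common neighbour of the non-adjacent pair $x_3,x_4$, so $|R|\leq\mu_{\Delta}(x_3,x_4)\leq c_2-1=5$. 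Since each $z\in R$ has at least $25-5-5=15$ neighbours inside $\bar{\Delta}(x_1,x_2)$ and at most $4$ of them lie in $R\setminus\{z\}$, it has at least $11$ neighbours in $A\cup B$, hence at least $6$ in one of them, say $A$; and then $z$ must be adjacent to \emph{all} of $A$, since a non-neighbour $a\in A$ would have at least $6>5$ common neighbours with $z$ in $\Delta$. (Your own configuration can also be finished by counting rather than interlacing: $N:=\bar{\Delta}(x_1,x_2,z)$ is, by Lemma~\ref{maxclique}, a clique with at least $11$ vertices, and $N=A_z\sqcup B_z\sqcup R_z$, where $R_z$ is the set of non-neighbours of $z$ in $R$, because no vertex avoids all of $x_1,x_2,x_3,x_4$ and $x_3,x_4\notin N$; then $x_4$ and any $b\in B_z$ are non-adjacent yet have all of $A_z\cup R_z$, i.e.\ at least $11-|B_z|\geq 6$, common neighbours, contradicting $\mu_{\Delta}\leq 5$.) Either way the tool is the $c_2$-bound on common neighbours, not eigenvalue interlacing; as written, your proof of the central claim $R\subseteq D(x_3)\cup D(x_4)$ is a gap.

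The same defect recurs in your final step. When the hypothetical common vertex $w$ of $D(x_3)$ and $D(x_4)$ is adjacent to $x_1$ or $x_2$, you claim a contradiction from ``intersection at least $3$ with both cliques of size at most $18$'' via Lemma~\ref{clique}$(iii)$; but that lemma and its interlacing proof only forbid intersections of size at least $5$, so nothing in the paper supports this. No case distinction is needed at all: once the partition is established, $D(x_3)\cup D(x_4)$ contains all of $\bar{\Delta}(x_1,x_2)$, a set of $28+\mu_{\Delta}(x_1,x_2)\geq 28$ vertices, and a vertex $w$ lying in both cliques is adjacent to every other vertex of this union, hence has at least $27$ neighbours in $\Delta$ (at least $28$ if $w\notin\bar{\Delta}(x_1,x_2)$), against valency $25$. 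This is exactly how the paper concludes, uniformly in $w$.
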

\begin{proof}
Let $p\neq q\in \{1,2,3,4\}$ be numbers distinct from $i$ and $j$. Note that $\bar{\Delta}(x_i, x_j)$ includes both $\bar{\Delta}(x_i, x_j,x_p)$ and $\bar{\Delta}(x_i, x_j,x_q)$. By Lemma~\ref{maxclique},  both $\bar{\Delta}(x_i, x_j,x_p)$ and $\bar{\Delta}(x_i, x_j,x_q)$ are cliques with at least 11 vertices containing $x_q$ and $x_p$ respectively.  Define $C_p$ and $C_q$ as the maximal cliques in $\bar{\Delta}(x_i, x_j)$ containing $\bar{\Delta}(x_i, x_j,x_q)$ and $\bar{\Delta}(x_i, x_j,x_p)$ respectively. Note that by Lemma~\ref{clique}, $C_p$ and $C_q$ have at most $4$ common vertices. 

Now we consider a vertex $y$ in  $\bar{\Delta}(x_i, x_j)\setminus(\bar{\Delta}(x_i, x_j,x_p)\cup \bar{\Delta}(x_i, x_j,x_q))$ (if it exists). As $y$ has valency at least 15 in $\bar{\Delta}(x_i, x_j)$, we see that $y$ has at least $(15-4)/2 > 5$ neighbors in one of $\bar{\Delta}(x_i, x_j,x_p)$ and $\bar{\Delta}(x_i, x_j,x_q)$,  say without loss of generality $\bar{\Delta}(x_i, x_j,x_p)$, i.e., $y$ is adjacent to all vertices in $\bar{\Delta}(x_i, x_j,x_p)$. So, any vertex in $\bar{\Delta}(x_i, x_j)$ is contained in $C_p$ or $C_q$. If a vertex of $\bar{\Delta}(x_i,x_j)$ is contained in both $C_p$ and $C_q$, then the vertex has at least 27 neighbors in $\Delta$, a contradiction. So, no vertex in $\bar{\Delta}(x_i, x_j)$ is contained in both $C_p$ and $C_q$. This shows that $\bar{\Delta}(x_i, x_j)$ can be partitioned into two cliques $C_p$ and $C_q$, and both $C_p$ and $C_q$ have at least $11$ vertices. Note that $D(x_p)$ and $D(x_q)$ contain $C_p$ and $C_q$ respectively, i.e., $d(x_p)+d(x_q)\geq|C_p|+|C_q|\geq28$. If $D(x_p)$ and $D(x_q)$ have a common vertex, then the vertex has at least 27 neighbors in $\Delta$, a contradiction. So, $D(x_p)$ and $D(x_q)$ are disjoint. This finishes the proof.
\end{proof}

From Lemma~\ref{disjoint1}, we see that $D(x_1), D(x_2), D(x_3)$ and $D(x_4)$ are (pairwise) disjoint.

In the following lemma, we give some results on $d(x_1), d(x_2), d(x_3)$ and $d(x_4)$, where $x_1,x_2,x_3$ and $x_4$ are vertices of a coclique in $\Delta$. 

\begin{lemma}\label{disjoint2}  Let $\Gamma$ be a distance-regular graph with intersection array \\
$\{80,54,12; 1,6,60\}$. Let $x$ be a vertex of $\Gamma$ and let $\Delta:=\Delta(x)$ be the local graph at $x$. Let $\bar{C}$ be a coclique in $\Delta$ with distinct vertices $x_1, x_2, x_3$ and $x_4$. Then, without loss of generality, the following hold:
\begin{enumerate} 
\item[$(i)$] $11 \leq d(x_1) \leq d(x_2) \leq d(x_3) \leq d(x_4) \leq 20;$
\item[$(ii)$] $d(x_1) + d(x_2) + d(x_3) + d(x_4) \geq 64;$
\item[$(iii)$] $d(x_1) + d(x_2) + d(x_3) \geq 47;$
\item[$(iv)$] $d(x_1) + d(x_4) \geq 31.$
 \end{enumerate}
\end{lemma}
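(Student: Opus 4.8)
The plan is to deduce all four statements from a single lower bound on the total $d(x_1)+d(x_2)+d(x_3)+d(x_4)$, obtained by a double count of the incidences between $\bar{C}=\{x_1,x_2,x_3,x_4\}$ and the rest of $\Delta$, together with the disjointness of the cliques $D(x_i)$. Statement (i) is the easy part: each $D(x_i)$ is a maximal clique of $\Delta$, so $d(x_i)\le 20$ by Lemma~\ref{clique}$(ii)$, while $d(x_i)\ge 11$ by Lemma~\ref{maxclique}; relabelling the $x_i$ gives the displayed ordering.

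For the counting, for $k=0,1,2,3,4$ let $n_k$ be the number of vertices of $\Delta\setminus\bar{C}$ adjacent to exactly $k$ of the $x_i$. Since $\Delta$ has no coclique of size $5$ (Proposition~\ref{5coclique}), every vertex outside $\bar{C}$ has a neighbour in $\bar{C}$, so $n_0=0$. Counting the $4\times 25=100$ edges between $\bar{C}$ and its complement gives $n_1+2n_2+3n_3+4n_4=100$, and counting the $80-4=76$ vertices outside $\bar{C}$ gives $n_1+n_2+n_3+n_4=76$; subtracting yields $n_2+2n_3+3n_4=24$. Counting the paths $x_i-v-x_j$ through a vertex $v\notin\bar{C}$ gives $\sum_{i<j}\mu_{\Delta}(x_i,x_j)=n_2+3n_3+6n_4$. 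Because $x$ is a common neighbour of $x_i$ and $x_j$ in $\Gamma$ and $\mu_{\Gamma}(x_i,x_j)=c_2=6$, we have $\mu_{\Delta}(x_i,x_j)\le 5$, so $n_2+3n_3+6n_4\le 6\cdot 5=30$. Subtracting the relation $n_2+2n_3+3n_4=24$ gives $n_3+3n_4\le 6$, hence $n_3+n_4\le 6$.

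The crucial step is then to show that every vertex adjacent to at most two of the $x_i$ lies in one of the $D(x_i)$. Indeed, each $x_i$ lies in $D(x_i)$; a vertex adjacent to exactly one $x_i$ lies in $\bar{\Delta}(x_j,x_p,x_q)\subseteq D(x_i)$; and a vertex adjacent to exactly $x_i$ and $x_j$ lies in $\bar{\Delta}(x_p,x_q)$, which by Lemma~\ref{disjoint1} is the union of two cliques contained in $D(x_i)$ and $D(x_j)$. Thus the only vertices that can escape $\bigcup_i D(x_i)$ are those adjacent to three or four of the $x_i$, of which there are at most $n_3+n_4\le 6$. Since the $D(x_i)$ are pairwise disjoint (Lemma~\ref{disjoint1}), it follows that $\sum_i d(x_i)=\bigl|\bigcup_i D(x_i)\bigr|\ge 80-6=74$.

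From this the three bounds follow at once, using $d(x_i)\le 20$: statement (ii) is immediate since $74\ge 64$; for (iii), $d(x_1)+d(x_2)+d(x_3)=\sum_i d(x_i)-d(x_4)\ge 74-20=54\ge 47$; and for (iv), $d(x_1)+d(x_4)=\sum_i d(x_i)-d(x_2)-d(x_3)\ge 74-40=34\ge 31$. The main obstacle is the coverage step: one must verify that the two-clique decomposition supplied by Lemma~\ref{disjoint1} genuinely accounts for \emph{all} vertices missing at most two of the $x_i$, so that only the few vertices adjacent to three or four of the $x_i$ are left uncounted; once this is secured, the counting bound $n_3+n_4\le 6$ and the disjointness of the $D(x_i)$ do the rest.
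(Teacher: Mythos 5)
Your proposal is correct, but it takes a genuinely different route from the paper's and in fact proves strictly more. The paper argues pairwise: from Lemma~\ref{disjoint1} it extracts $d(x_i)+d(x_j)\geq |\bar{\Delta}(x_p,x_q)| = 28+\mu_{\Delta}(x_p,x_q)$, combines this with the inclusion--exclusion bound $\sum_{i<j}\mu_{\Delta}(x_i,x_j)\geq 4\cdot 26-80=24$ and with $\mu_{\Delta}\leq 5$, and then derives $(ii)$, $(iii)$ and $(iv)$ by three separate case analyses of how the $\mu$-mass of $24$ can be spread over the six pairs. You instead prove the single global bound $\sum_i d(x_i)\geq 74$: your incidence counts give $n_2+2n_3+3n_4=24$ and $n_2+3n_3+6n_4=\sum_{i<j}\mu_{\Delta}(x_i,x_j)\leq 30$, hence $n_3+n_4\leq 6$, and your coverage claim (every vertex meeting at most two of the $x_i$ lies in $\bigcup_i D(x_i)$, which by disjointness has exactly $\sum_i d(x_i)$ vertices) does the rest; all three inequalities then follow from $d(x_i)\leq 20$ with room to spare ($74$, $54$, $34$ versus the paper's $64$, $47$, $31$). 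Your stronger conclusion would even streamline the paper's Proposition~\ref{four}, where $d_{\bar{C}}(x_1)\geq 74-3\cdot 20=14$ becomes immediate rather than the endpoint of a chain of estimates. One point you should make explicit: the statement of Lemma~\ref{disjoint1} only asserts that $\bar{\Delta}(x_p,x_q)$ splits into two cliques of size at least $11$; that these two cliques lie inside $D(x_i)$ and $D(x_j)$ respectively is established in its proof, not its statement. It can be recovered from the statement in one line: $x_i$ and $x_j$ are non-adjacent, so they lie in different parts; the part containing $x_j$ is a clique and therefore misses all of $\bar{\Delta}(x_j,x_p,x_q)$, so the part containing $x_i$ contains $\bar{\Delta}(x_j,x_p,x_q)$ and, by the uniqueness in Lemma~\ref{maxclique}, is contained in $D(x_i)$. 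With that half-line added, your argument is complete and correct.
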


\begin{proof}
$(i)$ It follows from Lemma~\ref{clique} and Lemma \ref{disjoint1}.

\noindent
$(ii)$: As $\mu_{\Delta}(x_1, x_2) + \mu_{\Delta}(x_1, x_3) + \mu_{\Delta}(x_1, x_4) + \mu_{\Delta}(x_2, x_3) + \mu_{\Delta}(x_2, x_4) + \mu_{\Delta}(x_3, x_4) \geq
4\times(25+1) - 80 = 24,$ at least one of $\mu_{\Delta}(x_1, x_2) + \mu_{\Delta}(x_3, x_4)$, $\mu_{\Delta}(x_1, x_3) + \mu_{\Delta}(x_2, x_4)$ and $\mu_{\Delta}(x_1, x_4) + \mu_{\Delta}(x_2, x_3)$ is at least $8$. From Lemma~\ref{disjoint1}, we know that $d(x_i)+d(x_j)\geq 80-(2\times(25+1)-\mu_{\Delta}(x_p,x_q))$ holds for $\{i,j,p,q\}=\{1,2,3,4\}$. This means that $d(x_1) + d(x_2) + d(x_3) + d(x_4) \geq 2\times(80 - 52) + 8 = 64.$
 
\noindent 
$(iii)$: By Lemma~\ref{disjoint1}, we have $d(x_1) + d(x_2) \geq 28 +\mu_{\Delta}(x_3, x_4)$, $d(x_1) + d(x_3) \geq 28 + \mu_{\Delta}(x_2, x_4)$ and $d(x_2) + d(x_3) \geq 28 + \mu_{\Delta}(x_1, x_4)$.
As $ \mu_{\Delta}(x_1, x_4) + \mu_{\Delta}(x_2, x_4) + \mu_{\Delta}(x_3, x_4) \geq 24-(\mu_{\Delta}(x_1, x_2) + \mu_{\Delta}(x_1, x_3) + \mu_{\Delta}(x_2, x_3))\geq24 -15 =9$, we see that 
$2(d(x_1) + d(x_2) + d(x_3))\geq 84 +9=93$ and hence $(iii)$ follows.

\noindent
$(iv)$: As $\mu_{\Delta}(x_2, x_3) + \mu_{\Delta}(x_2, x_4) + \mu_{\Delta}(x_3, x_4) \geq 24-15 =9$, there exist distinct $y_1, y_2 \in \{ x_2, x_3, x_4\} $ such that $\mu_{\Delta}(y_1, y_2) \geq 3$. Let $y \in \{x_2, x_3, x_4\} \setminus \{ y_1, y_2\}$. Then $d(x_1) + d(x_4) \geq d(x_1) + d(y) \geq 28 + \mu_{\Delta}(y_1, y_2) \geq 31$, and hence $(iv)$ follows.
 
 \noindent
 This finishes the proof.
\end{proof}

We note that Lemma~\ref{disjoint2} $(iii)$ also means that $d(x_3)\geq16$.

 In the following lemma, we will show that if for a vertex $x$ of a coclique in $\Delta$ with 4 vertices, $d(x)=20$, then any local graph $\Delta$ of $\Gamma$ can be partitioned into $4$ cliques with $20$ vertices.

\begin{lemma}\label{disjoint3}  Let $\Gamma$ be a distance-regular graph with intersection array \\
$\{80,54,12; 1,6,60\}$. Let $x$ be a vertex of $\Gamma$ and let $\Delta:=\Delta(x)$ be the local graph at $x$. Let $\bar{C}$ be a coclique in $\Delta$ with distinct vertices $x_1,x_2,x_3$ and $x_4$. Without loss of generality, we asume that $d(x_1)\leq d(x_2)\leq d(x_3)\leq d(x_4)$. If $d(x_4) = 20$, then $d(x_1) = d(x_2) = d(x_3) = 20$ and $\Delta$ can be partitioned into four cliques with $20$ vertices.
\end{lemma}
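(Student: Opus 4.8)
The plan is to reduce the whole statement to a single \emph{covering} assertion. Since $d(x_4)=20$, the clique $D(x_4)$ meets the bound in Lemma~\ref{clique}$(ii)$, so every vertex of $\Delta$ outside $D(x_4)$ has exactly two neighbours in $D(x_4)$. By Lemma~\ref{disjoint1} the four cliques $D(x_1),D(x_2),D(x_3),D(x_4)$ are pairwise disjoint, and by Lemma~\ref{disjoint2}$(i)$ each has at most $20$ vertices. Hence once I show that these four cliques cover $V(\Delta)$, I obtain $d(x_1)+d(x_2)+d(x_3)+d(x_4)=80$; together with $d(x_4)=20$ and $d(x_1)\le d(x_2)\le d(x_3)\le 20$ this forces $d(x_1)=d(x_2)=d(x_3)=20$, and the four cliques are exactly the desired partition of $\Delta$ into four cliques with $20$ vertices. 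So everything reduces to proving that no vertex of $\Delta$ is missed.

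To control the missed vertices, I first record which ones are automatically covered. For any pair $\{a,b\}\subseteq\{1,2,3,4\}$ with complement $\{p,q\}$, Lemma~\ref{disjoint1} partitions $\bar{\Delta}(x_a,x_b)$ into two cliques that are contained in $D(x_p)$ and $D(x_q)$ respectively; thus every vertex non-adjacent to two of $x_1,x_2,x_3,x_4$ already lies in one of the four cliques. Consequently a missed vertex must be adjacent to at least three of $x_1,x_2,x_3,x_4$. I then bound how many such vertices can exist: writing $t_v$ for the number of $x_i$ adjacent to a vertex $v$, one has $\sum_v t_v=4\cdot 25=100$ and $\sum_v\binom{t_v}{2}=\sum_{i<j}\mu_{\Delta}(x_i,x_j)\le 6\cdot 5=30$, the last inequality because any two non-adjacent vertices of $\Delta$ have at most $c_2-1=5$ common neighbours. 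A missed vertex contributes at least $\binom{3}{2}=3$ to the second sum, so there are at most $10$ missed vertices; equivalently $d(x_1)+d(x_2)+d(x_3)\ge 50$.

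The remaining, and hardest, step is to improve this to \emph{no} missed vertices. Here I would use the two-neighbour property of $D(x_4)$ quantitatively: since each $x_i$ $(i\le 3)$ has exactly two neighbours in $D(x_4)$, at most four vertices of $D(x_4)$ are adjacent to $x_a$ or $x_b$, so the clique $D(x_4)\cap\bar{\Delta}(x_a,x_b)$ has at least $16$ vertices for each pair $\{a,b\}\subseteq\{1,2,3\}$; feeding this back through the partitions of Lemma~\ref{disjoint1} pins down the sizes of the sub-cliques of $D(x_1),D(x_2),D(x_3)$ cut out by these pairs. Mirroring the clique-union argument of Proposition~\ref{5coclique} — where two cliques sharing a $6$-vertex set were glued into a clique of size $16$ — I would first establish $d(x_3)=20$, and then, with $D(x_3)$ and $D(x_4)$ both of size $20$ and hence both enjoying the two-neighbour property, bootstrap to $d(x_1)=d(x_2)=20$ and to the covering of all remaining vertices. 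Any putative missed vertex would at that stage force two cliques of size at least $16$ to share a vertex, contradicting Lemma~\ref{clique}$(iv)$. I expect this elimination of the (at most ten) vertices adjacent to three or four of the $x_i$ to be the main obstacle: the counting above leaves them open, and ruling them out requires the more delicate geometric bookkeeping just described.
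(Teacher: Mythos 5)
Your reduction to the covering assertion is sound and matches the paper's own opening move, and your intermediate observations are correct as far as they go: missed vertices must be adjacent to at least three of the $x_i$ (via the partition in Lemma~\ref{disjoint1}), and the counting $\sum_v t_v = 100$, $\sum_v \binom{t_v}{2} \leq 30$ does cap their number at ten. But the proof has a genuine gap exactly where you admit it does: the elimination of missed vertices is only sketched, and the sketch does not close. You never justify how the $\geq 16$-vertex subcliques of $D(x_4)$ would ``establish $d(x_3)=20$,'' nor why a putative missed vertex ``would force two cliques of size at least $16$ to share a vertex''; and the bound of ten missed vertices feeds into no contradiction at all. Since this elimination is the entire content of the lemma beyond bookkeeping, the proposal is incomplete.

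The idea you are missing is much simpler, and it uses Proposition~\ref{5coclique} as a statement rather than as a template to imitate. Suppose $z$ lies in none of the $D(x_i)$. Since $c_2=6$, two non-adjacent vertices of $\Delta$ have at most $5$ common neighbours, so if $z$ had $6$ or more neighbours in the maximal clique $D(x_i)$ it would have to be adjacent to every vertex of $D(x_i)$, contradicting maximality of $D(x_i)$; hence $z$ (and likewise any vertex outside $D(x_i)$) has at most $5$ neighbours in each $D(x_i)$. Now build a coclique greedily: pick $z_1\in D(x_1)$ non-adjacent to $z$ (possible since $d(x_1)\geq 11>5$), then $z_2\in D(x_2)$ non-adjacent to both $z$ and $z_1$ (since $11>5+5$), then $z_3\in D(x_3)$ non-adjacent to $z,z_1,z_2$ (since $d(x_3)\geq 16>15$, by the remark following Lemma~\ref{disjoint2}). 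Finally --- and this is precisely where the hypothesis $d(x_4)=20$ enters --- Lemma~\ref{clique} $(ii)$ gives that each of $z,z_1,z_2,z_3$ has \emph{exactly two} neighbours in $D(x_4)$, so since $4\times 2=8<20$ there is a vertex $z_4\in D(x_4)$ non-adjacent to all four, and $\{z,z_1,z_2,z_3,z_4\}$ is a coclique with $5$ vertices, contradicting Proposition~\ref{5coclique}. Note that your quantitative use of Lemma~\ref{clique} $(ii)$ (counting neighbours of the $x_i$ inside $D(x_4)$) points in the wrong direction; its real role is that ``exactly two'' beats the generic bound ``at most five,'' which at $4\times 5=20$ would just fail to leave a free vertex in $D(x_4)$ for the extension.
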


\begin{proof}
If each vertex of $\Delta$ is contained in one of $D(x_1), D(x_2),D(x_3)$ and $D(x_4)$, then we are done (by Lemma~\ref{disjoint2}). So, we may assume that  there exists a vertex $z$ of $\Delta$ that is not contained in $D(x_i)$ for all $i=1,2,3,4$. Note that if $z$ has more than $5$ neighbors in $D(x_i)$ ($i=1,2,3,4$), then $D(x_i)$ is not maximal and $D(x_i)$ should contain $z$, as all vertices in $D(x_i)$ are adjacent to $z$, i.e., $z$ has at most $5$ neighbors in $D(x_i)$ for all $i=1,2,3,4$. Then clearly there exists a vertex $z_1$ in $D(x_1)$ that is not adjacent to $z$. As $D(x_2)$ has at least 11 vertices, there exists a vertex $z_2$ in $D(x_2)$ that is not adjacent to both $z$ and $z_1$. As $D(x_3)$ has at least $16$ vertices, we can extend the coclique $\{z, z_1, z_2\}$ to a coclique $\{z, z_1, z_2, z_3\}$, where $z_3$ is a vertex of $D(x_3)$. By Lemma~\ref{clique} $(ii)$ each of $z, z_1, z_2, z_3$ has exactly two neighbors in $D(x_4)$, and then we find a coclique $\{z, z_1, z_2, z_3, z_4\}$, where $z_4$ is a vertex of $D(x_4)$, a contradiction. This finishes the proof.
\end{proof}

 In the following proposition, we will prove that any local graph $\Delta$ of  $\Gamma$ consists of $4$ disjoint cliques with 20 vertices.

\begin{proposition}\label{four}
 Let $\Gamma$ be a distance-regular graph with intersection array \\
$\{80,54,12; 1,6,60\}$. Let $x$ be a vertex of $\Gamma$ and let $\Delta:=\Delta(x)$ be the local graph at $x$. Then the graph $\Delta$ contains $4$ disjoint cliques with $20$ vertices.
\end{proposition}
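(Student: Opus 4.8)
The plan is to show that the four pairwise disjoint cliques $D(x_1),\dots,D(x_4)$ attached to a maximal coclique actually cover $\Delta$; since they are disjoint and each has at most $20$ vertices by Lemma~\ref{clique}$(ii)$, covering $\Delta$ forces $\sum_i d(x_i)=80$ and hence $d(x_1)=d(x_2)=d(x_3)=d(x_4)=20$, so that $\Delta$ is the disjoint union of four $20$-cliques (equivalently, once $d(x_4)=20$ is established one may simply quote Lemma~\ref{disjoint3}). First I would fix a maximal coclique $\bar C=\{x_1,x_2,x_3,x_4\}$ (all maximal cocliques have $4$ vertices, by Proposition~\ref{5coclique} and $3\times 26<80$) and put $R:=V(\Delta)\setminus\bigcup_i D(x_i)$. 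By maximality every vertex of $\Delta$ is adjacent to at least one $x_i$, and a vertex adjacent to exactly one $x_i$, say $x_1$, lies in $\bar{\Delta}(x_2,x_3,x_4)\subseteq D(x_1)$; hence $R$ consists only of vertices adjacent to at least two of the $x_i$. The whole proposition thus reduces to proving $R=\emptyset$.

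So suppose $R\neq\emptyset$ and fix $z\in R$. Since $z\notin D(x_i)$ and $D(x_i)$ is a maximal clique, $z$ has a non-neighbour in each $D(x_i)$, and (as $c_2=6$, so $\mu_{\Delta}\le 5$) at most $5$ neighbours there; thus $z$ has at most $20$ neighbours in $\bigcup_i D(x_i)$ and therefore at least $5$ neighbours inside $R$, giving $|R|\ge 6$. The easy case is when some $z\in R$ is adjacent to exactly two of the $x_i$, say $x_1$ and $x_2$. Then I would build a coclique of size $5$ by choosing $w_1\in\bar{\Delta}(x_2,x_3,x_4)$ and $w_2\in\bar{\Delta}(x_1,x_3,x_4)$: each of these sets is a clique on at least $11$ vertices (Lemma~\ref{maxclique}), and since $z$ has at most $5$ neighbours in each while $w_1$ has at most $5$ neighbours in $\bar{\Delta}(x_1,x_3,x_4)\subseteq D(x_2)$, one can choose $w_1\not\sim z$ and then $w_2\not\sim z,w_1$. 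The set $\{z,x_3,x_4,w_1,w_2\}$ is then a coclique on $5$ vertices, because $w_1,w_2$ are non-adjacent to $x_3,x_4$ by construction, contradicting Proposition~\ref{5coclique}.

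The main obstacle is the remaining, ``dense'', case, in which every vertex of $R$ is adjacent to at least three of the $x_i$; here the naive greedy extension fails, since a fifth coclique vertex would have to avoid four neighbourhoods inside a clique of size at most $20$, so the contradiction must come from rigidity rather than a direct construction. I would argue by a global count. Writing $j(v)$ for the number of $x_i$ adjacent to $v$ and $n_\ell$ for the number of vertices $v$ with $j(v)=\ell$, the identities $\sum_i|\Gamma(x_i)|=100$, $\sum_{v\neq x_i}1=76$ and $\sum_{i<j}|\Gamma(x_i)\cap\Gamma(x_j)|=\sum_v\binom{j(v)}{2}$, together with $\mu_{\Delta}(x_i,x_j)\le 5$, give $n_2+2n_3+3n_4=24$ and $n_2+3n_3+6n_4\le 30$, whence $n_3+3n_4\le 6$. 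Since all of the (at least six) vertices of $R$ are counted by $n_3+n_4$, this forces $|R|=6$, $n_4=0$ and $n_3=6$; so $R$ is a $6$-clique in which each $z$ has exactly $5$ neighbours in each $D(x_i)$ and is non-adjacent to exactly one $x_{f(z)}$.

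To finish, for such a $z$ the five neighbours of $z$ in $D(x_{f(z)})$ are its only common neighbours with $x_{f(z)}$ (they lie in the clique $D(x_{f(z)})$ and $\mu_{\Delta}(z,x_{f(z)})\le 5$), so every neighbour of $z$ outside $D(x_{f(z)})$—in particular the other five vertices of the clique $R$—is non-adjacent to $x_{f(z)}$; hence the function $f$ is constant on $R$. But then two of the coclique vertices are adjacent to all six vertices of $R$, yielding $\mu_{\Delta}\ge 6$, a contradiction. This establishes $R=\emptyset$, hence $d(x_1)=\dots=d(x_4)=20$, proving that $\Delta$ is the disjoint union of four cliques with $20$ vertices. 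I expect the delicate step to be precisely the dense case: everything there is forced to be extremal, and the final contradiction rests entirely on the common-neighbour bound $\mu_{\Delta}\le 5$.
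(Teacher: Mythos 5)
Your proof is correct, and it takes a genuinely different route from the paper's. The shared frame is only the reduction: fix a maximal $4$-coclique $\bar{C}=\{x_1,x_2,x_3,x_4\}$, take the pairwise disjoint maximal cliques $D(x_1),\dots,D(x_4)$ of Lemmas~\ref{maxclique} and~\ref{disjoint1}, and show that no vertex of $\Delta$ lies outside their union, whence $\sum_i d(x_i)=80$ and Lemma~\ref{clique}$(ii)$ force all $d(x_i)=20$. The paper handles an uncovered vertex $z$ structurally: it chooses $z_1\in\bar{\Delta}(x_2,x_3,x_4)$ and $z_2\in\bar{\Delta}(x_1,x_3,x_4)$ with $z,z_1,z_2$ pairwise non-adjacent, forms a second maximal coclique $\bar{C}'=\{z_1,z_2,x_3,x_4\}$, proves $D_{\bar{C}}(x_1)=D_{\bar{C}'}(z_1)$ and $D_{\bar{C}}(x_2)=D_{\bar{C}'}(z_2)$, and then plays the inequalities of Lemma~\ref{disjoint2} for both cocliques against Lemma~\ref{clique}$(iv)$ to squeeze $d_{\bar{C}'}(x_3)$ into an impossible range. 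You instead split according to how many vertices of $\bar{C}$ an uncovered vertex sees (at least two, as you correctly note): with exactly two neighbours you build a $5$-coclique directly, contradicting Proposition~\ref{5coclique}; in the dense case your exact double count ($n_2+2n_3+3n_4=24$ and $n_2+3n_3+6n_4\le 30$, hence $n_3+3n_4\le 6$), combined with $|R|\ge 6$ (each uncovered vertex has at most $5$ neighbours in each maximal clique it avoids), forces the extremal configuration --- $R$ a $6$-clique, each $z\in R$ with exactly $5$ neighbours in every $D(x_i)$ --- after which $\mu_\Delta\le 5$ makes $z\mapsto x_{f(z)}$ constant on $R$ and hands two non-adjacent coclique vertices six common neighbours, a contradiction. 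I checked the individual steps and they are sound. What your route buys is economy: it bypasses Lemma~\ref{disjoint2}, Lemma~\ref{disjoint3} and Lemma~\ref{clique}$(iv)$ entirely, replacing a delicate chain of size estimates by one case distinction plus inclusion-exclusion; what the paper's route buys is reuse of machinery it has already established. One small point of care: the ``exactly $5$ neighbours in each $D(x_i)$'' rigidity in your dense case silently uses the pairwise disjointness of the $D(x_i)$, so Lemma~\ref{disjoint1} should be cited explicitly at that step.
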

\begin{proof}
 Let $\bar{C}$ be a coclique in $\Delta$ with vertices $x_1,x_2,x_3$ and $x_4$. Without loss of generality, we assume that $d_{\bar{C}}(x_1)\leq d_{\bar{C}}(x_2)\leq d_{\bar{C}}(x_3)\leq d_{\bar{C}}(x_4)$. If each vertex of $\Delta$ is contained in one of $D_{\bar{C}}(x_1), D_{\bar{C}}(x_2),D_{\bar{C}}(x_3)$ and $D_{\bar{C}}(x_4)$, then we are done (by Lemma~\ref{disjoint2}). So, we may assume that there exists a vertex $z$ of $\Delta$ that is not contained in all of $D_{\bar{C}}(x_1),D_{\bar{C}}(x_2),D_{\bar{C}}(x_3)$ and $D_{\bar{C}}(x_4)$.  Note that $z$ has at most $5$ neighbors in each of $\bar{\Delta}(x_2,x_3,x_4)$ and $\bar{\Delta}(x_1,x_3,x_4)$  otherwise $z$ is in $D_{\bar{C}}(x_1)$ or $D_{\bar{C}}(x_2)$. Then we can find $z_1\in \bar{\Delta}(x_2,x_3,x_4)$ and $z_2\in \bar{\Delta}(x_1,x_3,x_4)$ such that $z,z_1$ and $z_2$ are mutually non-adjacent (as each of $\bar{\Delta}(x_1,x_3,x_4)$ and $\bar{\Delta}(x_2,x_3,x_4)$ has at least 11 vertices). Clearly, $z_1$ and $z_2$ are non-adjacent to $x_3$ and $x_4$, i.e., $\bar{C'}:=\{z_1,z_2,x_3, x_4\}$ is a maximal coclique in $\Delta$.

Now we consider four maximal cliques $D_{\bar{C'}}(z_1), D_{\bar{C'}}(z_2), D_{\bar{C'}}(x_3)$ and $D_{\bar{C'}}(x_4)$, but we do not assume that $d_{\bar{C'}}(z_1)\leq d_{\bar{C'}}(z_2)\leq d_{\bar{C'}}(x_3)\leq d_{\bar{C'}}(x_4)$. We will first show that $D_{\bar{C}}(x_1)=D_{\bar{C'}}(z_1)$ and $D_{\bar{C}}(x_2)=D_{\bar{C'}}(z_2)$ hold. By Lemma~\ref{disjoint1}, we know that $\bar{\Delta}(x_3,x_4)$ consists of two disjoint cliques $C_1$ and $C_2$, one of which contains $x_1$ and the other $x_2$. As $\bar{\Delta}(z_2, x_3, x_4)$ is a clique with at least $11$ vertices, without loss of generality we assume that $C_1$ contains at least $6$ of them. But this means that $\bar{\Delta}(z_2, x_3, x_4)$ is contained in $C_1$, and hence $D_{\bar{C}}(x_1)=D_{\bar{C'}}(z_1)$ holds. Similarly, we obtain $D_{\bar{C}}(x_2)=D_{\bar{C'}}(z_2)$.


Note that  $x_3$ is contained in $D_{\bar{C}}(x_3)\cap D_{\bar{C'}}(x_3)$. Then by Lemma~\ref{clique} $(iv)$, we find that $d_{\bar{C}}(x_3)+d_{\bar{C'}}(x_3)\leq30$. As $d_{\bar{C}}(x_3)\geq16$, we have $d_{\bar{C'}}(x_3)\leq14$. As $D_{\bar{C}}(x_1)=D_{\bar{C'}}(z_1)$ and  $D_{\bar{C}}(x_2)=D_{\bar{C'}}(z_2)$, we have $d_{\bar{C}}(x_1)=d_{\bar{C'}}(z_1)$ and $d_{\bar{C}}(x_2)=d_{\bar{C'}}(z_2)$. By Lemma~\ref{disjoint2} $(iii)$, we obtain $d_{\bar{C}}(x_1)+d_{\bar{C}}(x_2)+d_{\bar{C'}}(z_3)=d_{\bar{C'}}(z_1)+d_{\bar{C'}}(z_2)+d_{\bar{C'}}(z_3)\geq47$, and hence $d_{\bar{C}}(x_1)+d_{\bar{C}}(x_2)\geq33$ holds. Thus, we have $d_{\bar{C}}(x_3)\geq d_{\bar{C}}(x_2)\geq17$. As $d_{\bar{C}}(x_3)+d_{\bar{C'}}(x_3)\leq30$ and $d_{\bar{C}}(x_3)\geq17$, we obtain $d_{\bar{C'}}(x_3)\leq13$, and this shows that $d_{\bar{C}}(x_1)+d_{\bar{C}}(x_2)\geq34$. As $d_{\bar{C}}(x_2)\leq20$, we obtain $d_{\bar{C}}(x_1)\geq14$, and this shows that each of $d_{\bar{C'}}(z_1),d_{\bar{C'}}(z_2),d_{\bar{C'}}(x_3)$ and $d_{\bar{C'}}(x_4)$ is at least $14$. Then it contradicts $d_{\bar{C'}}(x_3)\leq13$. So, there does not exist a vertex $z$ of $\Delta$ that is not contained in all of $D_{\bar{C}}(x_1),D_{\bar{C}}(x_2),D_{\bar{C}}(x_3)$ and $D_{\bar{C}}(x_4)$. This finishes the proof.
\end{proof}

Proposition~~\ref{four} shows that the graph $\Gamma$ is a geometric distance-regular graph. Now, we prove that there does not exist such a geometric distance-regular graph by using a result of Bang and Koolen~\cite{KB2010}.

\begin{theorem}\label{main}
There does not exist a distance-regular graph with intersection array  $\{80, 54,12; 1, 6, 60\}$.
\end{theorem}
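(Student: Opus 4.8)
The plan is to combine Proposition~\ref{four} with the two structural results of Koolen and Bang (Lemmas~\ref{taupsi} and~\ref{kbgeometric}) to force a numerical contradiction. First I would invoke Proposition~\ref{four}: for every vertex $x$ the local graph $\Delta(x)$ is a disjoint union of four cliques of size $20$. Since the Delsarte bound gives clique size $1 - k/\theta_3 = 1 - 80/(-4) = 21$, each of these four cliques together with $x$ is a Delsarte clique of size $21$, and they partition the edges through $x$. Thus the collection of all such cliques witnesses that $\Gamma$ is geometric, with every vertex lying in exactly four Delsarte cliques; this is exactly what Proposition~\ref{four} records.

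Next I would read off the parameters $\tau_i$ and $\psi_i$ from Lemma~\ref{taupsi}, writing $1 - k/\theta_3 = 21$ throughout. Part (ii) with $i=1$ gives $c_1 = \tau_1\psi_0 = \tau_1$, so $\tau_1 = 1$ (equivalently, the edge lies in a unique Delsarte clique). Then part (i) with $i=1$ yields
\[
b_1 = -(\theta_3 + \tau_1)(21 - \psi_1) = 3(21 - \psi_1),
\]
and since $b_1 = 54$ this forces $\psi_1 = 3$. Finally part (ii) with $i=2$ gives $c_2 = \tau_2\psi_1$, i.e.\ $6 = 3\tau_2$, so $\tau_2 = 2$.

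The contradiction then comes from Lemma~\ref{kbgeometric}: because $\Gamma$ is geometric with $c_2 = 6 \geq 2$, one must have $\tau_2 \geq \psi_1$. But the values just computed give $\tau_2 = 2 < 3 = \psi_1$, which is impossible. Hence no geometric distance-regular graph with these parameters exists, and since Proposition~\ref{four} shows that any distance-regular graph with intersection array $\{80,54,12;1,6,60\}$ is necessarily geometric, no such graph exists at all.

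I expect essentially no difficulty in this concluding step itself: all of the genuine work lies in establishing geometricity (Propositions~\ref{5coclique} and~\ref{four}), and what remains is a short substitution into the Koolen--Bang identities. The only points requiring care are the bookkeeping that turns the decomposition of $\Delta$ into the formal assertion that $\Gamma$ is geometric with Delsarte cliques of size $21$, and the correct identification of the indices ($\psi_1$ and $\tau_2$) at which the inequality of Lemma~\ref{kbgeometric} is violated.
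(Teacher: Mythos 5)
Your proposal is correct and follows exactly the paper's own argument: Proposition~\ref{four} gives geometricity with Delsarte cliques of size $21$, Lemma~\ref{taupsi} yields $\psi_1=3$ and $\tau_2=2$, and Lemma~\ref{kbgeometric} then gives the contradiction $\tau_2\geq\psi_1$. In fact you spell out the substitution into Lemma~\ref{taupsi} (computing $\tau_1=1$, then $\psi_1$ from $b_1$, then $\tau_2$ from $c_2$) in more detail than the paper, which merely asserts the values $\tau_2=2$ and $\psi_1=3$.
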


\begin{proof}
 Let $\Gamma$ be a distance-regular graph with intersection array \\ $\{80,54,12; 1,6,60\}$.
By Proposition \ref{four}, there exists a set of Delsarte cliques, i.e. cliques with 21 vertices, of $\Gamma$ such that each edge lies in exactly one such Delsarte clique. And this means that $\Gamma$ is geometric.  By Lemma~\ref{kbgeometric}, we know that $\tau_2 \geq \psi_1$ holds. But in this case, by Lemma~\ref{taupsi}, we have $\tau_2=2$ and $\psi_1=3$, a contradiction. This finishes the proof.
\end{proof}

\section*{Acknowledgments}
Q. Iqbal and M.U. Rehman are supported by Chinese Scholarship Council at USTC, Hefei, China. J.H. Koolen has been partially supported by the National Natural Science Foundation of China (Grants No. 11471009 and No. 11671376) and by the Anhui Initiative in Quantum Information Technologies (Grant No. AHY150200). J. Park is supported by Basic Research Program through the National Research Foundation of Korea funded by Ministry of Education (NRF-2017R1D1A1B03032016).

\end{document}